\def\R{\mathds{R}}
\def\N{\mathds{N}}
\def\I{\mathds{I}}
\def\Z{\mathds{Z}}
\newtheorem{theorem}{Theorem}[section]
\newtheorem{lemma}[theorem]{Lemma}
\newtheorem{remark}{Remark}[section]
\def\endproof{\hfill $\square$
\endtrivlist}
\begin{document}
\parskip5pt
 \author{A. C. Rosa, M. E. Nogueira \footnote{Adress: Departamento de Matem\'{a}tica, Apartado 3008, EC Santa Cruz, $3001-501$ Coimbra. E-mail: cristina@mat.uc.pt, memn@mat.uc.pt.}}

\title{Nonparametric estimation of a regression function using the gamma kernel method in ergodic processes \footnote{The work was supported by the Department of Mathematics of the University of Coimbra.}}
\maketitle

\vspace*{1cm}

\noindent{\bf Abstract.} In this paper we consider the nonparametric estimation of density and regression functions with non-negative support using a gamma kernel procedure introduced by Chen (\cite{Chen:00}). Strong uniform consistency and asymptotic normality of the corresponding estimators are established under a general ergodic assumption on the data generation process. Our results generalize those of Shi and Song (\cite{ShiSong:15}), obtained in the classic i.i.d. framework, and the works of Bouezmarni and Rombouts \cite{BouRomb:08,BouRomb:10b} and Gospodinov and Hirukawa \cite{GospHiru:07} for mixing time series.
\vspace*{0.25cm}

\noindent{\bf  Keywords.} Ergodic processes, gamma kernel estimation, regression function, strong uniform consistency, central limit theorem.
\vspace*{0.25cm}

\noindent{\it AMS Classification}: 62G08, 62M10 (Primary), 60F05, 60F15 (Secondary).

\section{Introduction}
\label{Sec1}

As it is well known, a major drawback of the standard kernel method for nonparametric curve estimation concerns the presence of the so-called bounded effects. Bounded effects occur when the support of the underlying variables is a subinterval of the real line and the estimates are based on a symmetric kernel, leading to an increase of the bias near the boundary of the support.  Since the pioneering works of Gasser and M\"{u}ller (\cite{GassMuller:79}), Rice (\cite{Rice:84}), Schuster (\cite{Schuster:85}) and Gasser et al. (\cite{GassMullerMammit:85}), several approches to overcome this problem have been investigated (for an overview of the main correction techniques, the reader is referred to Simonoff (\cite{Simonoff:98}), Karunamuni and Alberts (\cite{KarunAlberts:05}) and Dai and Sperlich (\cite{DaiSperl:10})). Among the existing proposals, the boundary kernel method has shown to be one of the most popular. The general idea behind this method is to modify the kernel's form near the endpoints of the support, either by using adaptive kernels in the boundary region and a fixed symmetric kernel in the interior or by considering asymmetric kernels, whose shape and scale parameters change in accordance with the position of the target point, allowing to adjust the local smoothness of the estimate in a natural way.

Asymmetric kernels, namely beta and gamma kernels, were introduced by Brown and Chen (\cite{BrChen:99}) and Chen (\cite{Chen:99},\cite{Chen:00}) to estimate densities supported in $[0,1]$ and $[0,+\infty[$, respectively. Apart from having the same support as the curve under consideration, they present other appealing features such as achieving the optimal convergence rate for the mean integrated square error of classical kernels and showing good finite sample performance.

Regarding gamma kernels, which are the goal of our study, Chen's proposal and its refinements remained a topic of interest for researchers (c.f. Geenens and Wang (\cite{GeenWang:16}) for an review on the subject and Malec and Schienle (\cite{MaleSchi:14}) and Hirukawa and Sakuda (\cite{HiruSak:14}) for a recent simulation studies), although other types of asymmetric kernels have been suggested in the last decades (e.g. the inverse gaussian and the reciprocal inverse gaussian kernels of Scaillet (\cite{Scaillet:04})). However, as pointed out by Koul and Song (\cite{KoulSong:13}), most existing results are devoted to density estimation and address essentially asymptotic bias, variance and mean square error derivations in the i.i.d. setting. In the last few years, there has been increasing attention given to consistency and limiting distributions of both density and regression estimators in this context (c.f. Bouezmarni and Rombouts (\cite{BouRomb:10a}), Bouezmarni et al. (\cite{BouGhMes:11}), Shi and Song (\cite{ShiSong:15}), Koul and Song (\cite{KoulSong:13})) as well as their natural extensions in stationary time series context (c.f., for instance, Bouezmarni and Rombouts (\cite{BouRomb:08},\cite{BouRomb:10b}), Markovich (\cite{Markovich:15}), in the class of mixing processes, and Chaubey et al. (\cite{ChauLaiSen:12}) in a larger class).

The last authors, arguing that the traditional mixing hypotheses imposed on the observation process are not satisfied in many cases (several examples of ergodic and non-mixing processes may also be found in Bouzebda and Didi (\cite{Bouz:15})), worked under the general dependence condition of ergodicity introduced by Gy\"{o}rfi (\cite{Gyorfi:81}) and considered by Delecroix et al. (\cite{DelNogRosa:92}), Delecroix and Rosa (\cite{DelRosa:96}), Yakowitz et al. (\cite{YakGyorfKiefferMorvai:99}) and La\"{\i}b and Ould-Sa\"{\i}d (\cite{LaibOuldSaid:00}). In fact, this condition gained a renewed interest after the paper of La\"{\i}b and Louani (\cite{LaibLouani:10}), giving rise to some new consistency results with convergence rate for nonparametric curve estimation  (cf. La\"{\i}b and Louani (\cite{LaibLouani:10},\cite{LaibLouani:11}), Chaouch and Khardani (\cite{ChaKhar:15}), Bouzebda and Didi (\cite{Bouz:15}), Benziadi et al. (\cite{Benz:16}), Ling and Liu (\cite{LingLiu:16}), Ling et al. (\cite{LingLiuVieu:16})).

Following the works of Chaubey et al. (\cite{ChauLaiSen:12}) and Shi and Song (\cite{ShiSong:15}), we prove, in the present paper, the uniform consistency and the asymptotic normality of density and regression estimators based on the gamma kernel proposed by Chen (\cite{Chen:00}), in the framework of discrete time ergodic processes. With respect to the central limit theorem, we remark that, under mild conditions imposed on the bandwidth, the convergence rates and the asymptotic variances obtained in our work agree with those of Shi and Song (\cite{ShiSong:15}) considering the i.i.d. setup.

The paper is organized as follows: section \ref{Sec2} introduces the estimators as well as the general notations and assumptions on the observation process; section \ref{Sec3} provides the main convergence results and a few commentaries concerning their hypotheses; the proofs of the propositions and some auxiliary lemmas are presented in section \ref{sec4}.

\section{Assumptions and notations}
\label{Sec2}

Let $\left\{(X_t,Y_t),t \in \Z\right\}$ be a $\left(\R_0^{+}\right)^2$-valued
 stochastic process on the proba\-bi\-lity space $\left(\Omega, \mathcal{A}, P\right)$ which is assumed to be strictly
stationary and ergodic, with absolutely continuous margin distributions.
The density function of $X_t$ will be denoted by $f$.

For each $x \in \R_0^+$ such that $f(x)>0$, $R(x) = E \left(\Phi(Y_1)/ X_1=x\right)$ stands for the conditional expectation of $\Phi(Y_1)$ given $X_1=x$, where $\Phi$ is a known measurable function of $\R_0^+$ into $\R$ such that $E \left(\left|\Phi(Y_1)\right|\right)<+\infty$.

Based on a sample $\{(X_t,Y_t)\}_{_{t=1}}^{n}$, our goal is to study some asymptotic pro\-per\-ties of the following estimator of $R(x)$ $$R_n(x)= \frac{\,\overset{n}{\underset{t=1}{\sum }}\ \Phi(Y_t)\, K_{_{\alpha(n,x),\beta(n)}}\!(X_t)\ }{\overset{n}{\underset{t=1}{\sum }}\,K_{_{\alpha(n,x),\beta(n)}}\!(X_t)},$$
$K_{_{\alpha(n,x),\beta(n)}}$ being the density function of the gamma distribution with shape and scale parameters $\alpha(n,x)= \frac{x}{\,h_n\,} + 1$ and $\beta(n)= h_n$, respectively given by $$\textstyle K_{_{\alpha(n,x),\beta(n)}}\!(y) = \frac{1}{\,\Gamma(\alpha(n,x))\,\beta(n)^{\alpha(n,x)\,}}\  y^{\alpha(n,x)-1}\ e^{-\,\frac{y}{\,\beta(n)\,}}\ \I_{[0,+\infty[}(y)\,,\ \ \ y \in \R.$$
As usual, $(h_n)_{_{n\in\N}}$ is the bandwidth sequence, i.e. $h_n \in \R^+$,\ $n \in \N$, $\underset{n\rightarrow+\infty}{\lim}h_n = 0$, and we adopt the convention that $\frac{\,y\,}{0}=0$, for all $y\in\R$.

 \vspace*{0.1cm}

In the sequel, we will consider the $\sigma$-fields \vspace*{0.25cm}

\centerline{$\mathcal{F}_t = \sigma\left\{(X_s, Y_s), \,s \leq t \right\},\ \ \mathcal{G}_t = \sigma\left\{X_{t+1}, (X_s, Y_s), s \leq t\right\}$,\ \ $t \in \Z$,\vspace*{0.2cm}}


\noindent and we will denote by $C_0(\R)$ the space of continuous functions on $\R$ tending to zero at infinity equipped with the sup-norm, $\|.\|_0$, and by $\|.\|_2$ the norm in $L^2\left(\Omega, \mathcal{A}, P\right)$.

For easy reference, the general assumptions needed to derive the announced results are gathered thereafter. \vspace*{-0.15cm}
\begin{enumerate}
\item[(H1)] For all $t \in \Z$, the conditional density of $X_t$ given $\mathcal{F}_{t-1}$, $f^{\mathcal{F}_{t-1}}$, exists; moreover, $f^{\mathcal{F}_{t-1}} \in C_0(\R)$ and $f \in C_0(\R)$.

\item[(H2)] $\left\|\,\frac{1}{\,n\,}\overset{n}{\underset{t=1}{\sum }}\,f^{\mathcal{F}_{t-1}}-f\,\right\|_0 \overset{a.s.}{\overrightarrow{\underset{n\rightarrow + \infty}}}\ 0$.

\item[(H3)] $R$ is a continuous and bounded function on $\R_0^+$.

\item[(H4)] $E\!\left( \Phi(Y_t) / \mathcal{G}_{t-1}\right)= E\!\left( \Phi(Y_t) / X_t\right)= R( X_t),\ \ t \in \Z.$
 \end{enumerate}

\section{Main results}
\label{Sec3}

\subsection{Strong uniform consistency of $D_n$ and $R_n$}

In order to establish the uniform convergence of $R_n$ on $\Delta = [a,b]$,\ $a,b\in\R^+$,\ $a<b$, we need a preliminary result concerning the behaviour of the gamma kernel estimator of $f$, i.e.  \vspace*{-0.15cm}
\begin{equation}\label{defDn}
\textstyle D_n(x) = \frac{1}{\,n\,} \overset{n}{\underset{t=1}{\sum }}\,K_{_{\alpha(n,x),\beta(n)}}\!(X_t).
\end{equation}

\begin{theorem}\label{convsupdens}
  If conditions (H1) and (H2) are satisfied and the sequence  $(h_n)_{_{n\in\N}}$ is such that \vspace*{-0.2cm}
$$ \underset{n \rightarrow + \infty}{\lim}\ \frac{\ n\,h_n\ }{\log n}\ =\ +\infty,\vspace*{-0.2cm}$$
\noindent then 
$$\underset{x\in \Delta}{\sup}\left|D_n(x) - f(x)\right|\ \overset{a.s.}{\overrightarrow{\underset{n\rightarrow + \infty}}}\ 0.$$
\end{theorem}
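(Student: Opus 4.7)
\proof[Plan of proof]
I would decompose $D_n(x)-f(x)$ into a "bias" part and a "martingale" part, by introducing
$$\widehat{D}_n(x)\;=\;\frac{1}{n}\sum_{t=1}^{n}E\!\left(K_{\alpha(n,x),\beta(n)}(X_t)\,\Big|\,\mathcal{F}_{t-1}\right),$$
and writing $D_n(x)-f(x)=V_n(x)+B_n(x)$ with $V_n(x)=D_n(x)-\widehat{D}_n(x)$ and $B_n(x)=\widehat{D}_n(x)-f(x)$. The goal is to show that each of $\sup_{x\in\Delta}|V_n(x)|$ and $\sup_{x\in\Delta}|B_n(x)|$ tends to $0$ almost surely.

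For the bias term, I would use (H1) to write $E\!\left(K_{\alpha(n,x),\beta(n)}(X_t)\,|\,\mathcal{F}_{t-1}\right)=\int K_{\alpha(n,x),\beta(n)}(y)\,f^{\mathcal{F}_{t-1}}(y)\,dy$ and then split
$$B_n(x)\;=\;\int K_{\alpha(n,x),\beta(n)}(y)\!\left[\bar f_n(y)-f(y)\right]dy\;+\;\left\{\int K_{\alpha(n,x),\beta(n)}(y)f(y)\,dy-f(x)\right\},$$
where $\bar f_n=\frac{1}{n}\sum_{t=1}^{n}f^{\mathcal{F}_{t-1}}$. Since $\int K_{\alpha(n,x),\beta(n)}=1$, the first piece is bounded by $\|\bar f_n-f\|_0$, which tends to $0$ a.s.\ by (H2), uniformly in $x$. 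The second piece is the classical deterministic bias: the gamma law with shape $x/h_n+1$ and scale $h_n$ has mean $x+h_n$ and variance $xh_n+h_n^2$, hence it concentrates at $x$ as $h_n\to0$, and uniform convergence on $\Delta=[a,b]$ follows from uniform continuity of $f$ on a slightly enlarged compact set (using $f\in C_0(\R)$ from (H1)).

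For the martingale term $V_n(x)=\frac{1}{n}\sum_{t=1}^{n}Z_{n,t}(x)$, where $Z_{n,t}(x)=K_{\alpha(n,x),\beta(n)}(X_t)-E(K_{\alpha(n,x),\beta(n)}(X_t)|\mathcal{F}_{t-1})$, the sequence $(Z_{n,t}(x))_t$ is a martingale difference array with respect to $(\mathcal{F}_t)$. A Stirling estimate applied at the mode of the gamma density shows that $\sup_{x\in\Delta}\|K_{\alpha(n,x),\beta(n)}\|_\infty=O(h_n^{-1/2})$, so $|Z_{n,t}(x)|\le C\,h_n^{-1/2}$ uniformly. An exponential inequality of Azuma--Hoeffding type then yields, for each fixed $x$,
$$P\!\left(|V_n(x)|>\varepsilon\right)\;\le\;2\exp\!\left(-c\,n\,h_n\,\varepsilon^{2}\right).$$
To pass from pointwise to uniform control on $\Delta$, I would use a covering argument: cover $\Delta$ by a grid of $N_n=O(n^{\kappa})$ points $x_1,\dots,x_{N_n}$, control the oscillation of $K_{\alpha(n,\cdot),\beta(n)}(y)$ in $x$ through the Lipschitz behaviour of the map $x\mapsto K_{\alpha(n,x),\beta(n)}(y)$ (the derivative involves a digamma-type term that is polynomially bounded in $n$ on $\Delta$), apply the exponential bound at each grid point and combine with the condition $nh_n/\log n\to +\infty$ via Borel--Cantelli.

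\textbf{Main obstacle.} The delicate step is the martingale piece: obtaining a sharp uniform bound on $\|K_{\alpha(n,x),\beta(n)}\|_\infty$ via Stirling's formula (valid only thanks to $x\ge a>0$ on $\Delta$), and then making the covering/Lipschitz argument in $x$ precise enough so that the exponential deviation bound, combined with $nh_n/\log n\to+\infty$, gives a summable probability and hence a.s.\ uniform smallness of $V_n$. The bias part is comparatively routine once (H2) is exploited.
\endproof
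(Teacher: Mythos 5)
Your plan follows essentially the same route as the paper, which obtains Theorem \ref{convsupdens} by specializing the proof of Theorem \ref{convsupremo} to $\Phi\equiv 1$: the same decomposition through $\overline{D}_n(x)=\frac{1}{n}\sum_{t=1}^{n}E\!\left(K_{\alpha(n,x),\beta(n)}(X_t)/\mathcal{F}_{t-1}\right)$, the bias controlled by $\|\frac{1}{n}\sum_{t=1}^{n}f^{\mathcal{F}_{t-1}}-f\|_0$ via (H2) plus the concentration of the gamma law at $x$ (the paper's Lemma \ref{lema2Hille}), and the martingale part handled by the uniform bound $K_{\alpha(n,x),\beta(n)}\leqslant C/\sqrt{x\,h_n}$, Azuma's inequality on a polynomial grid, the Lipschitz estimate $|K_{\alpha(n,x),\beta(n)}(y)-K_{\alpha(n,u),\beta(n)}(y)|\leqslant C|x-u|\,h_n^{-3/2}$ for the oscillation between grid points, and Borel--Cantelli under $n h_n/\log n\to+\infty$. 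The approach and all key ingredients coincide with the paper's.
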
 \vspace*{0.1cm}


\begin{theorem}\label{convsupremo}
  In addition to conditions (H1) to (H4), suppose that\ \ $$E\!\left( \left|\Phi(Y_1)\right|^{\tau+1}\right) < + \infty,\ for\ some\ \tau > 0.$$
   If $I = {\underset {x\in \Delta}{\inf}}f(x)>0$\ \ and\ \ the sequence  $(h_n)_{_{n\in\N}}$ verifies \vspace{-0.25cm}
\begin{center}
$ n\sqrt{h_n\,}\ {\big{\uparrow}} +\!\infty,\ \ \ \ \ \exists\, \theta \in \left]0,\frac{\tau}{\,\tau+1\,}\right[\,:\ \ \ \displaystyle \underset{n \rightarrow + \infty}{\lim}\ \frac{\ n^{\theta}\,h_n\ }{\log n} = +\infty,$ \vspace{-0.25cm}
\end{center}
\noindent we have
$$\underset{x\in \Delta}{\sup}\left|R_n(x) - R(x)\right|\ \overset{a.s.}{\overrightarrow{\underset{n\rightarrow + \infty}}}\ 0.$$
\end{theorem}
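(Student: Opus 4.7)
The strategy rests on the classical ratio decomposition. Writing $N_n(x) = \frac{1}{n}\sum_{t=1}^{n}\Phi(Y_t)K_{\alpha(n,x),\beta(n)}(X_t)$, so that $R_n(x)=N_n(x)/D_n(x)$, one has
\[
R_n(x) - R(x) = \frac{N_n(x) - R(x)\,D_n(x)}{D_n(x)}.
\]
Theorem \ref{convsupdens} gives $\sup_{x\in\Delta}|D_n(x)-f(x)|\to 0$ a.s., which combined with $I=\inf_\Delta f>0$ implies $\inf_{x\in\Delta}D_n(x)\geq I/2$ a.s. eventually, so the proof reduces to showing that $\sup_{x\in\Delta}|N_n(x)-R(x)D_n(x)|\to 0$ a.s. Invoking (H4) together with the $\mathcal{G}_{t-1}$-measurability of $X_t$, I decompose the numerator into a centred martingale part and a smoothing bias part,
\[
N_n(x)-R(x)D_n(x) = A_n(x) + B_n(x),
\]
where $A_n(x)=\frac{1}{n}\sum_{t=1}^{n}[\Phi(Y_t)-R(X_t)]K_{\alpha(n,x),\beta(n)}(X_t)$ (whose summands are $\mathcal{G}_t$-martingale differences by (H4)) and $B_n(x)=\frac{1}{n}\sum_{t=1}^{n}[R(X_t)-R(x)]K_{\alpha(n,x),\beta(n)}(X_t)$.

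For $B_n$ I would mimic the pattern of Theorem \ref{convsupdens}: fixing $\varepsilon>0$, I pick $\eta>0$ via the uniform continuity of $R$ on a compact neighbourhood of $\Delta$, and split each summand according to whether $|X_t-x|<\eta$ or $|X_t-x|\geq\eta$. The ``near'' part is bounded by $\varepsilon\,D_n(x)$, whose uniform behaviour is already given by Theorem \ref{convsupdens}. The ``far'' part is controlled by the fact that the gamma law with parameters $(x/h_n+1,h_n)$ has mean $x+h_n$ and variance $xh_n+h_n^2$, so by Chebyshev its mass outside the $\eta$-ball around $x$ tends to $0$ uniformly on $\Delta$; injecting this into the (H1)--(H2) scheme used in Theorem \ref{convsupdens} yields $\sup_{x\in\Delta}|B_n(x)|\to 0$ a.s.

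The main work is $A_n$. I would truncate: set $\Phi_t^{\ast} = \Phi(Y_t)\,\I_{\{|\Phi(Y_t)|\leq b_n\}}$ with $b_n=n^{\rho}$, and decompose the centred summands into a bounded part $A_n^{(1)}$ driven by $\Phi_t^{\ast}-E(\Phi_t^{\ast}\mid\mathcal{G}_{t-1})$ and a tail remainder $A_n^{(2)}$. The moment condition $E|\Phi(Y_1)|^{\tau+1}<+\infty$ together with the uniform kernel bound $\sup_{x\in\Delta}K_{\alpha(n,x),\beta(n)}(y)\leq C/\sqrt{h_n}$ render $\sum_n P(\sup_{x\in\Delta}|A_n^{(2)}(x)|>\varepsilon)$ summable as soon as $\rho(\tau+1)>1$, so Borel--Cantelli disposes of $A_n^{(2)}$. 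For $A_n^{(1)}$ each summand is a martingale difference bounded by $2Cb_n/\sqrt{h_n}$ with conditional second moment of order $1/\sqrt{h_n}$; covering $\Delta$ by a polynomial grid and applying a Bernstein--Freedman exponential inequality for martingales at each grid point produces a deviation probability that, owing to the bandwidth hypothesis $n^{\theta}h_n/\log n\to+\infty$, is summable in $n$, while the monotone condition $n\sqrt{h_n}\uparrow+\infty$ keeps the quadratic-variation contribution dominated throughout the estimate.

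The delicate point---and the one I expect to be the real obstacle---is the simultaneous calibration of the truncation exponent $\rho$, the polynomial grid spacing used to pass from pointwise to uniform control on $\Delta$, and the two bandwidth conditions. The constraint $\theta<\tau/(\tau+1)$ imposed in the statement is precisely what allows a choice $\rho>1/(\tau+1)$ (needed to kill $A_n^{(2)}$) to coexist with a truncation level $b_n=n^{\rho}$ small enough that the Bernstein exponent in the treatment of $A_n^{(1)}$ remains summable; a weaker lower bound on $h_n$ would break this balance, which explains why the hypotheses on $h_n$ must be tightened compared to those of Theorem \ref{convsupdens}.
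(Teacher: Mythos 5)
Your overall architecture (ratio decomposition, reduction to the numerator, bias/martingale split, truncation, grid plus exponential inequality) matches the paper's, but two of your choices create genuine gaps. First, the calibration of the truncation does not close. You need $\rho>1/(\tau+1)$ to dispose of the tail $A_n^{(2)}$ and, for the Bernstein/Azuma exponent $n^{1-2\rho}h_n/\log n\to+\infty$ to follow from the hypothesis $n^{\theta}h_n/\log n\to+\infty$, you need $1-2\rho\geqslant\theta$, i.e. $\rho\leqslant(1-\theta)/2$. These two constraints are compatible only when $\theta<(\tau-1)/(\tau+1)$, which is strictly stronger than the stated $\theta<\tau/(\tau+1)$ and is vacuous for $\tau\leqslant1$. (Moreover, summing $P(\exists\,t\leqslant n:|\Phi(Y_t)|>n^{\rho})\leqslant n^{1-\rho(\tau+1)}$ over $n$ actually requires $\rho(\tau+1)>2$, not $>1$, unless you switch to a $t$-dependent threshold.) The paper avoids this by truncating at $M_t=t^{(1-\theta)/2}$ and treating the large-value part $\Phi^{+}$ not through an exceedance/Borel--Cantelli event but through Kronecker's lemma and Van Ryzin's lemma (Lemma \ref{convAn+}): the only requirement there is $\sum_t t^{-1-\tau k}h_t^{-1/2}<+\infty$ with $k=(1-\theta)/2$, which holds exactly when $\theta<\tau/(\tau+1)$. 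This is where the hypothesis $n\sqrt{h_n}\uparrow+\infty$ is really used (Kronecker), not to ``dominate the quadratic variation''.

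Second, your centering at $R(X_t)$ rather than at $E(\cdot/\mathcal{F}_{t-1})$ leaves a stochastic fluctuation inside your bias term $B_n(x)=\frac{1}{n}\sum_t(R(X_t)-R(x))K_{_{\alpha(n,x),\beta(n)}}(X_t)$. The Chebyshev bound on the gamma law only controls the conditional mean of the ``far'' part; the empirical quantity $\frac{1}{n}\sum_t K_{_{\alpha(n,x),\beta(n)}}(X_t)\I_{\{|X_t-x|\geqslant\eta\}}$ still requires a uniform-in-$x$ martingale argument, and the indicator destroys the Lipschitz continuity in $x$ on which the chaining step (Lemma \ref{Klipschiz}) rests, so ``injecting this into the scheme of Theorem \ref{convsupdens}'' is not automatic. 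The paper's decomposition sidesteps this entirely: it centers $N_n$ at $\overline{N}_n(x)=\frac{1}{n}\sum_t E(\Phi(Y_t)K_{_{\alpha(n,x),\beta(n)}}(X_t)/\mathcal{F}_{t-1})$, so that all randomness sits in a single $\mathcal{F}_t$-martingale term, while the bias $\overline{N}_n(x)-R(x)f(x)$ becomes a deterministic-type integral of $R$ against $\frac{1}{n}\sum_t f^{\mathcal{F}_{t-1}}$, handled directly by (H2), the uniform continuity of $Rf$ and Lemma \ref{lema2Hille}. You should adopt that centering and the $t$-dependent truncation to make your argument go through on the full parameter range.
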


\subsection{Asymptotic normality of $D_n$ and $R_n$}

Let us begin by presenting some addi\-tional assumptions. 
\begin{enumerate}
\item[(H5)] $E\!\left( \Phi^{\zeta+2}(Y_1)\right)$ exists, for some $\zeta>0$.\vspace*{0.1cm}
    \item[(H6)]
 \begin{enumerate}
 \item[(i)] $E\!\left( \Phi(Y_t)^2 / \mathcal{G}_{t-1}\right)= E\!\left( \Phi(Y_t)^2 / X_t\right)= W_2( X_t),\ \ t\in\Z .$\vspace*{0.1cm}
 \item[(ii)] $W_{\zeta + 2}( y)= E\!\left( \left|\Phi(Y_1)\right|^{\zeta+2}/ X_1 = y\right),\ \ y \in \R^+_0$, is a bounded function.\vspace*{0.1cm}
\item[(iii)] $\sigma^2( y)= V\!\left( \Phi(Y_1)/ X_1 = y\right),\ \ y \in \R^+_0$, is a continuous function.
 \end{enumerate}
\item[(H7)] The second order derivatives of $f$ and $R$ are continuous and bounded on $\R^+_0$.
\item[(H8)] $\underset{y \in \R^+_0}{\sup} \left\| \overset{n}{\underset{t=1}{\sum }}\,f^{\mathcal{F}_{t-1}}(y)-nf(y) \right\|^2_2 = O(n).$
\end{enumerate}

We are now in position to state the central limit theorems concerning the gamma kernel estimators of $f$ and $R$. \vspace*{0.25cm}

\begin{theorem}\label{TeorTLCdensity}
Let $x\in\R_0^+$ be such that $f(x)>0$. In addition to (H1), (H2), (H7) and (H8), suppose that the sequence $\left(\frac{1}{\,n\,}\overset{n}{\underset{t=1}{\sum }}\,\left(f^{\mathcal{F}_{t-1}}\right)^2\right)_{n \in \N}$ converges in $\,C_0(\R)$.
If \vspace*{-0.1cm}
\begin{enumerate}
  \item[a)] $\underset{n\rightarrow+\infty}{\lim} n\,\sqrt{h_n\,} = + \infty$\ \ and\ \ $\underset{n\rightarrow+\infty}{\lim} n\,\sqrt{h_n^5\,}=0 $, then
 $$\textstyle\sqrt{n\,\sqrt{h_n\,}\,}\ (D_n(x)-f(x))\ \overset{\mathcal{D}}{\overrightarrow{\underset{n\rightarrow + \infty}}}\ \ \mathcal{N}\left(0,  \frac{f(x)}{\ 2\,\sqrt{\pi\,x\,}\ }\right),\ \ \ x>0;$$
  \item[b)] $\underset{n\rightarrow+\infty}{\lim} n\,h_n = + \infty$\ \ and\ \ $\underset{n\rightarrow+\infty}{\lim} n\,h_n^3=0$, then \vspace*{-0.2cm} $$\textstyle \sqrt{n\,h_n\,}\ (D_n(0)-f(0))\ \overset{\mathcal{D}}{\overrightarrow{\underset{n\rightarrow + \infty}}}\ \ \mathcal{N}\left(0, \frac{\,f(0)\,}{\,2\,}\right).\vspace*{0.25cm}$$
\end{enumerate}
\end{theorem}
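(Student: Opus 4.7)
The plan is to use a martingale decomposition adapted to the ergodic setup together with Chen's (\cite{Chen:00}) asymptotic analysis of the gamma kernel. Setting $\bar D_n(x):=\frac{1}{n}\sum_{t=1}^{n}\int K_{\alpha(n,x),\beta(n)}(y)\,f^{\mathcal{F}_{t-1}}(y)\,dy$, the differences $\xi_{n,t}:=\frac{1}{n}\bigl(K_{\alpha(n,x),\beta(n)}(X_t)-\int K_{\alpha(n,x),\beta(n)}(y)f^{\mathcal{F}_{t-1}}(y)\,dy\bigr)$ form a triangular martingale difference array with respect to $(\mathcal{F}_t)$, and we have the decomposition
$$\sqrt{n\sqrt{h_n}}\,(D_n(x)-f(x)) \;=\; \sqrt{n\sqrt{h_n}}\,\sum_{t=1}^{n}\xi_{n,t} \;+\; \sqrt{n\sqrt{h_n}}\,(\bar D_n(x)-f(x)).$$
The goal is to show the martingale part converges in distribution to the announced normal, while the ``bias'' part vanishes.

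For the bias, split $\bar D_n(x)-f(x)$ as $\int K_{\alpha(n,x),\beta(n)}(y)\bigl(\tfrac{1}{n}\sum_t f^{\mathcal{F}_{t-1}}(y)-f(y)\bigr)\,dy + \bigl(\int K_{\alpha(n,x),\beta(n)}(y)f(y)\,dy-f(x)\bigr)$. Minkowski's inequality combined with (H8) bounds the $L^2$-norm of the first integral by $O(n^{-1/2})$ (since $\int K_{\alpha(n,x),\beta(n)}=1$), so its contribution after scaling by $\sqrt{n\sqrt{h_n}}$ is $O(h_n^{1/4})\to 0$. The second summand is precisely the deterministic bias of Chen's gamma kernel density estimator, of order $O(h_n)$ under (H7) (\cite{Chen:00}, Thm.~1); the hypothesis $n\sqrt{h_n^5}\to 0$ (respectively $nh_n^3\to 0$ at $x=0$, with rescaling $\sqrt{nh_n}$) kills the rescaled version.

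For the martingale contribution, invoke a martingale CLT (Hall--Heyde). The conditional variance reads
$$V_n \;=\; \sqrt{h_n}\!\int\! K_{\alpha(n,x),\beta(n)}^2(y)\,\tfrac{1}{n}\!\sum_{t=1}^n f^{\mathcal{F}_{t-1}}(y)\,dy \;-\; \sqrt{h_n}\,\tfrac{1}{n}\sum_{t=1}^{n}\Bigl(\!\int K_{\alpha(n,x),\beta(n)}(y)f^{\mathcal{F}_{t-1}}(y)\,dy\Bigr)^{\!2}.$$
Writing $\tfrac{1}{n}\sum_t f^{\mathcal{F}_{t-1}}=f+\varepsilon_n$ with $\|\varepsilon_n\|_0\to 0$ a.s.\ by (H2) and using $\sqrt{h_n}\int K_{\alpha(n,x),\beta(n)}^2=O(1)$, the first summand equals $\sqrt{h_n}\int K_{\alpha(n,x),\beta(n)}^2 f+O(\|\varepsilon_n\|_0)$, which converges a.s.\ to $f(x)/(2\sqrt{\pi x})$ by Chen's limit. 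For the second summand, the concentration of $K_{\alpha(n,x),\beta(n)}$ around $x$ gives $\int K f^{\mathcal{F}_{t-1}}\approx f^{\mathcal{F}_{t-1}}(x)$; the additional $C_0(\R)$-convergence of $\tfrac{1}{n}\sum (f^{\mathcal{F}_{t-1}})^2$ keeps the Cesàro mean of $(f^{\mathcal{F}_{t-1}}(x))^2$ bounded, hence this contribution is $O(\sqrt{h_n})\to 0$. The Lindeberg condition is established through a Lyapunov bound with exponent $2+\delta$ (any $\delta>0$): using the gamma integral $\int K^{2+\delta}_{\alpha(n,x),\beta(n)}=O(h_n^{-(1+\delta)/2})$ (Stirling extension of Chen's $\int K^2$ computation) and the a.s.\ boundedness of $\|\tfrac{1}{n}\sum f^{\mathcal{F}_{t-1}}\|_0$ from (H2), one obtains $\sum_t E\bigl(|\sqrt{n\sqrt{h_n}}\,\xi_{n,t}|^{2+\delta}\,\big|\,\mathcal{F}_{t-1}\bigr) = O\bigl((n^2 h_n)^{-\delta/4}\bigr)\to 0$ since $n\sqrt{h_n}\to\infty$.

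The boundary case $x=0$ is handled identically mutatis mutandis: $K_{1,h_n}$ is the exponential density with rate $1/h_n$, so $h_n\!\int K_{1,h_n}^2(y)f(y)\,dy\to f(0)/2$, which accounts for the faster scaling $\sqrt{nh_n}$ and the variance $f(0)/2$; the bias and Lyapunov estimates adapt verbatim under the corresponding bandwidth hypotheses. I expect the delicate step to be the variance identification, specifically the simultaneous transfer of Chen's scalar limit $\sqrt{h_n}\int K^2 f\to f(x)/(2\sqrt{\pi x})$ to the random Cesàro average (which relies on (H2)) and the negligibility of the quadratic correction (which crucially requires the additional $C_0(\R)$-convergence hypothesis on $\tfrac{1}{n}\sum(f^{\mathcal{F}_{t-1}})^2$ rather than only (H2)).
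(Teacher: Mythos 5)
Your proposal follows essentially the same route as the paper: the same martingale-difference decomposition with respect to $\mathcal{F}_{t-1}$, the bias term controlled via (H8) and Chen's $O(h_n)$ expansion under (H7), the conditional variance identified through (H2) together with the gamma-moment limits $\sqrt{h_n}\int K^2_{\alpha(n,x),\beta(n)}f \to f(x)/(2\sqrt{\pi x})$, and a Lyapunov-type bound for the Lindeberg condition. The one step to tighten is the quadratic correction $\sum_t E^2\!\left(V_{t,n}(x)/\mathcal{F}_{t-1}\right)$: instead of the pointwise approximation $\int K_{\alpha(n,x),\beta(n)}\,f^{\mathcal{F}_{t-1}}\approx f^{\mathcal{F}_{t-1}}(x)$ (which is not justified uniformly in $t$ since each $f^{\mathcal{F}_{t-1}}$ is a random element of $C_0(\R)$ with no common modulus of continuity), apply Cauchy--Schwarz to get $\bigl(\int K_{\alpha(n,x),\beta(n)}\,f^{\mathcal{F}_{t-1}}\bigr)^2\leqslant\int K_{\alpha(n,x),\beta(n)}\,(f^{\mathcal{F}_{t-1}})^2$ and then invoke the $C_0(\R)$-convergence hypothesis, exactly as the paper does; the resulting $O(\sqrt{h_n})$ bound and the conclusion are unchanged.
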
 \vspace*{0.1cm}

\begin{theorem} \label{asympNormal} Let $x\in\R_0^+$ be such that $f(x)>0$ and suppose that (H1) to (H8) hold. If \vspace*{-0.1cm}
\begin{enumerate}
  \item[a)] $\underset{n\rightarrow+\infty}{\lim} n\,\sqrt{h_n} = + \infty$\ \ and\ \ $\underset{n\rightarrow+\infty}{\lim} n\,\sqrt{h_n^5\,}=0 $, then \vspace*{-0.2cm}
 $$\textstyle \sqrt{n\,\sqrt{h_n\,}\,}\ (R_n(x)-R(x))\ \overset{\mathcal{D}}{\overrightarrow{\underset{n\rightarrow + \infty}}}\ \ \mathcal{N}\left(0,  \frac{\sigma^2(x)}{\,2\,\sqrt{\pi\,x\,}f(x)\,}\right),\ \ \ x>0;$$

  \item[b)] $\underset{n\rightarrow+\infty}{\lim} n\,h_n = + \infty$\ \ and\ \ $\underset{n\rightarrow+\infty}{\lim} n\,h_n^3=0$, then \vspace*{-0.2cm} $$\textstyle \sqrt{n\,h_n\,}\ (R_n(0)-R(0))\ \overset{\mathcal{D}}{\overrightarrow{\underset{n\rightarrow + \infty}}}\ \ \mathcal{N}\left(0, \frac{\sigma^2(0)}{\ 2\,f(0)\ }\right).\vspace*{0.25cm}$$
\end{enumerate}
\end{theorem}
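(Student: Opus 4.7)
The strategy is the usual Nadaraya--Watson reduction combined with a triangular-array martingale central limit theorem. Writing $K_n := K_{\alpha(n,x),\beta(n)}$, I begin from
$$R_n(x) - R(x) = \frac{1}{D_n(x)}\bigl[M_n(x) + B_n(x)\bigr],$$
where
$$M_n(x) = \frac{1}{n}\sum_{t=1}^{n}(\Phi(Y_t) - R(X_t))\,K_n(X_t), \qquad B_n(x) = \frac{1}{n}\sum_{t=1}^{n}(R(X_t) - R(x))\,K_n(X_t).$$
Point-wise consistency of $D_n$ (obtained as in Theorem \ref{TeorTLCdensity} or by a direct bias--variance decomposition under (H1)--(H2)) gives $D_n(x)\to f(x)>0$ in probability. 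Slutsky's theorem then reduces the claim to proving that $\sqrt{n\sqrt{h_n}}\,B_n(x)\to 0$ in case (a) (respectively $\sqrt{nh_n}\,B_n(0)\to 0$ in case (b)) and that $\sqrt{n\sqrt{h_n}}\,M_n(x)$ (resp. $\sqrt{nh_n}\,M_n(0)$) converges in law to the centred Gaussian of variance $\sigma^2(x)f(x)/(2\sqrt{\pi x})$ (resp. $\sigma^2(0)f(0)/2$).

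For the bias term I split $B_n=\bar B_n+(B_n-\bar B_n)$, with
$$\bar B_n(x)=\frac{1}{n}\sum_{t=1}^{n}E\bigl[(R(X_t)-R(x))K_n(X_t)\mid\mathcal{F}_{t-1}\bigr].$$
A second-order Taylor expansion of $R$ about $x$, made legitimate by (H7), combined with the moments of the gamma law with parameters $\alpha(n,x),\beta(n)$ (mean $x+h_n$, variance $(x+h_n)h_n$) shows that $\int(R(y)-R(x))K_n(y)\,g(y)\,dy=O(h_n)$ uniformly for $g$ ranging over a small $\|\cdot\|_0$-neighbourhood of $f$. Averaging over $t$ and using (H2) then gives $\bar B_n(x)=O(h_n)$ almost surely. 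The martingale remainder $B_n-\bar B_n$ is controlled by a second-moment estimate exploiting $\|K_n\|_\infty=O(h_n^{-1/2})$ and the local smallness of $R(X_t)-R(x)$ on the effective support of $K_n$. The bandwidth assumption $n\sqrt{h_n^5}\to 0$ (resp. $nh_n^3\to 0$) is exactly what is needed to make the rescaled bias negligible.

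The heart of the proof is the martingale CLT for $M_n$. By (H4), $\eta_{n,t}:=(\Phi(Y_t)-R(X_t))K_n(X_t)$ is a martingale-difference array with respect to $\{\mathcal{G}_t\}$ (hence also $\{\mathcal{F}_t\}$), and by (H6)(i)--(iii),
$$E[\eta_{n,t}^2\mid\mathcal{F}_{t-1}]=\int_0^{+\infty}\sigma^2(y)\,K_n^2(y)\,f^{\mathcal{F}_{t-1}}(y)\,dy.$$
The decisive identity is $K_n^2(y)=C_n\,K_n^{\ast}(y)$, where $K_n^{\ast}$ is the density of a gamma law with parameters $2\alpha(n,x)-1$ and $h_n/2$ and $C_n=\Gamma(2\alpha(n,x)-1)/[\Gamma(\alpha(n,x))^2\,2^{2\alpha(n,x)-1}\,h_n]$; Stirling's formula yields $\sqrt{h_n}\,C_n\to 1/(2\sqrt{\pi x})$. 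Since $K_n^{\ast}$ concentrates at $x$ (mean $\sim x$, variance $\sim xh_n/2$), the continuity of $\sigma^2$ from (H6)(iii) and the uniform ergodic averaging $\bar f_n:=\frac{1}{n}\sum_t f^{\mathcal{F}_{t-1}}\to f$ in $\|\cdot\|_0$ from (H2) together give
$$\frac{\sqrt{h_n}}{n}\sum_{t=1}^{n}E[\eta_{n,t}^2\mid\mathcal{F}_{t-1}]\;\longrightarrow\;\frac{\sigma^2(x)\,f(x)}{2\sqrt{\pi x}}\quad\text{in probability},$$
which is the predictable quadratic variation condition. A Lyapunov condition with exponent $\zeta+2$, and hence the Lindeberg condition, reduces to checking $(n\sqrt{h_n})^{-\zeta/2}\to 0$; this uses (H5) and the boundedness of $W_{\zeta+2}$ from (H6)(ii) and is granted by the assumption $n\sqrt{h_n}\to\infty$. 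The Hall--Heyde martingale CLT then delivers the stated limit. Part (b) is treated identically, the exact identity $\int_0^{+\infty}K_n^2(y)\,dy=1/(2h_n)$ replacing the Stirling computation.

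The main obstacle is the convergence of the predictable quadratic variation: two limit operations interact, the ergodic averaging of the random conditional densities $f^{\mathcal{F}_{t-1}}$ through (H2) and the concentration of $K_n^{\ast}$ at $x$. To interchange them rigorously one must control the covariance between the rescaled kernel and the partial averages $\bar f_n$ in an $L^2$ sense, which is exactly what hypothesis (H8) provides. The bias analysis and the Lyapunov verification, by contrast, are routine once (H5)--(H7) are in hand.
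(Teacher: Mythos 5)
Your overall architecture coincides with the paper's: Slutsky reduction via $D_n(x)\overset{P}{\to}f(x)$, a Hall--Heyde martingale CLT for the noise part, the identity $K_{\alpha(n,x),\beta(n)}^{2}=B(2,n,x)\,K^{*}_{n}$ with $\sqrt{h_n}\,B(2,n,x)\to \frac{1}{2\sqrt{\pi x}}$ together with the concentration of $K^{*}_{n}$ at $x$, and a Lyapunov condition of order $\zeta+2$ driven by $n\sqrt{h_n}\to\infty$. The only structural difference is that you split the numerator as $M_n+B_n$ and then re-center $B_n$, whereas the paper centers the entire numerator at its $\mathcal{F}_{t-1}$-conditional expectation, so that your term $B_n-\bar B_n$ is absorbed into their martingale $N_{1,n}$ and its contribution to the limiting variance vanishes automatically through $E\bigl(\bigl(R(G_{(2,n,x)})-R(x)\bigr)^{2}f(G_{(2,n,x)})\bigr)\to 0$. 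That reorganization is harmless.

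The genuine gap is in your treatment of $\bar B_n$. You assert that $\int (R(y)-R(x))K_{\alpha(n,x),\beta(n)}(y)\,g(y)\,dy=O(h_n)$ uniformly over a $\|\cdot\|_0$-neighbourhood of $f$, and hence that (H2) yields $\bar B_n(x)=O(h_n)$ a.s. This is false: for a perturbation $\varepsilon=g-f$ with $\|\varepsilon\|_0\leqslant\delta$ but no smoothness, the best available bound is $\bigl|\int (R(y)-R(x))K_{\alpha(n,x),\beta(n)}(y)\varepsilon(y)\,dy\bigr|\leqslant \delta\, E\bigl|R(G_{(1,n,x)})-R(x)\bigr|=O(\delta\sqrt{h_n})$, and this order is attained (take $\varepsilon=\delta\,\mathrm{sign}(R(\cdot)-R(x))$ with $R'(x)\neq0$). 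Since (H2) carries no rate, you only get $\bar B_n=O(h_n)+o(\sqrt{h_n})$ a.s., and $\sqrt{n\sqrt{h_n}}\cdot o(\sqrt{h_n})=o\bigl(\sqrt{n\,h_n^{3/2}}\bigr)$ need not vanish under your bandwidth assumptions: $h_n=n^{-1/2}$ satisfies both $n\sqrt{h_n}\to\infty$ and $n\sqrt{h_n^5}\to0$, yet $n\,h_n^{3/2}=n^{1/4}\to\infty$. The stochastic part of the conditional bias, $\bar B_n-E(B_n)=\frac{1}{n}\int (R(y)-R(x))K_{\alpha(n,x),\beta(n)}(y)\,H_n(y)\,dy$ with $H_n(y)=\sum_{t=1}^{n}f^{\mathcal{F}_{t-1}}(y)-nf(y)$, is precisely where (H8) must be used: Cauchy--Schwarz and $\sup_y\|H_n(y)\|_2^2=O(n)$ give $E\bigl|\bar B_n-E(B_n)\bigr|^2=O(1/n)$, hence $\sqrt{n\sqrt{h_n}}\,(\bar B_n-E(B_n))=O_P(h_n^{1/4})$, while the deterministic part $E(B_n)$ is the $O(h_n)$ Taylor bias killed by $n\sqrt{h_n^5}\to0$. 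You do invoke (H8), but you deploy it on the predictable quadratic variation, where it is not needed: there the prefactor $\sqrt{h_n}$ in front of $\int K_{\alpha(n,x),\beta(n)}^{2}(y)(\cdots)\bigl(\frac1n\sum_t f^{\mathcal{F}_{t-1}}(y)-f(y)\bigr)\,dy$ is exactly absorbed by $\sqrt{h_n}\,B(2,n,x)\to\frac{1}{2\sqrt{\pi x}}$, so the sup-norm convergence of (H2) already suffices. Relocating the $L^2$ argument from the quadratic variation to the bias fluctuation closes the gap.
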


The general conditions (H1) to (H8) as well as the hypotheses of Theorems \ref{convsupremo}, \ref{TeorTLCdensity} and \ref{asympNormal} will be discussed in the next section.

\subsection{Comments on the assumptions}
\label{subsec3}

We remark that assumption (H2) as well as the hypothesis concerning the convergence of $\,\left(\frac{1}{\,n\,}\overset{n}{\underset{t=1}{\sum }}\,\left(f^{\mathcal{F}_{t-1}}\right)^2\right)_{\!\!n \in \N}$ rely on the ergodic character of the data and became quite common in the general framework of ergodicity considered in the present paper (c.f. Delecroix et al. (\cite{DelNogRosa:92}), Delecroix and Rosa (\cite{DelRosa:96}), La\"{i}b and Ould-Sa\"{i}d (\cite{LaibOuldSaid:00}) and, more recently, condition (A2) (iii) of La\"{\i}b and Louani (\cite{LaibLouani:10},\cite{LaibLouani:11}), condition (A2) of  Chaubey  et al. (\cite{ChauLaiSen:12}), conditions (C1), (C2), (N1), (N5) of Bouzebda and Didi (\cite{Bouz:15}), condition (A3) (iii) of Ling and Liu (\cite{LingLiu:16}), condition (A2) 3. of Ling et al. (\cite{LingLiuVieu:16})).

Assumptions (H4) and (H6)\,(i) are Markov-type conditions similar to the ones considered by La\"{\i}b and Louani (\cite{LaibLouani:10}, \cite{LaibLouani:11}) (c.f. (A3) (i), (ii)),  Chaouch and Khardani (\cite{ChaKhar:15}) (c.f. (A4), p. 69) and Chaubey et al. (\cite{ChauLaiSen:12}) (c.f. (A4)\,(i),\,(ii), (A5)\,(ii), p. 977).  To derive the asymptotic distribution of $R_n$ we use the combination of regularity conditions  concerning the density, the regression function and higher conditional moments (namely (H6)\,(ii),\,(iii) and (H7)) taken from Shi and Song (\cite{ShiSong:15}) (c.f. (A2), (A3) and (A4), p. 3492) and Chaubey  et al. (\cite{ChauLaiSen:12}) (c.f. (A5)\,(ii) and (A6), p. 977).

(H8) is implied by the dependence condition considered by Chaubey et al. (\cite{ChauLaiSen:12}) (c.f. (A7), p. 977), which was introduced Wu (\cite{Wu:03}) (c.f. Lemma 3, p. 13) as an alternative to the usual mixing conditions. It is satisfied by several linear and nonlinear time series, as shown by the authors. The interested reader is also referred to Huang et al. (\cite{HuangChenWu:14}) for a more detailed discussion on the so-called predictive dependence measures related to this hypothesis.

The conditions required on the bandwidth in Theorems 3.2 and 3.3 cor\-respond to those of Shi and Song (c.f.(\cite{ShiSong:15}), theorems 3.2 and 3.4, p. 3493 and 3494, respectively). In spite of being more restrictive than the previous ones, our hypotheses are classical in dependence settings such as mixing. In order to assure the condition imposed on $\left(h_n\right)_{n \in \N}$ in Theorem \ref{convsupremo} we may take, for instance, $h_n= n^{-\alpha}$, with $\alpha<\theta$. As for Theorems \ref{TeorTLCdensity} and \ref{asympNormal}, a possible choice is $h_n= n^{-\alpha}$, with $\frac{2}{5}<\alpha<2$ in a) and $\frac{1}{3}<\alpha<\frac{1}{2}$ in b).

\section{Appendix} \label{sec4}

Firstly let us introduce some further notations and present two essential equalities that will be needed for the proofs.

For $x \geqslant 0$, we  write\ \ $R_n(x)= \frac{N_n(x)}{D_n(x)}$,\ \ where \vspace*{0.1cm}

\centerline{$N_n(x) = \frac{1}{\,n\,} \overset{n}{\underset{t=1}{\sum }}\,\Phi(Y_t)\, K_{_{\alpha(n,x),\beta(n)}}\!(X_t)$}\noindent and $D_n$ is defined by (\ref{defDn}). Futhermore, consider

\hspace*{0.25cm}  $\overline{N}_n(x) =  \frac{1}{\,n\,} \overset{n}{\underset{t=1}{\sum }}\,E\!\left(\Phi(Y_t)\, K_{_{\alpha(n,x),\beta(n)}}\!(X_t)/ \mathcal{F}_{t-1}\right)$\ \ and 

\hspace*{0.25cm}   $\overline{D}_n(x) =  \frac{1}{\,n\,} \overset{n}{\underset{t=1}{\sum }}\,E\!\left(K_{_{\alpha(n,x),\beta(n)}}\!(X_t)/ \mathcal{F}_{t-1}\right).$ \vspace*{0.2cm}

Observe that, under hypotheses (H4) and (H6)\,(i), a routine argument and the properties of conditional expectation lead to
\noindent \begin{itemize}
  \item[(i)] $E\!\left(\Phi(Y_t)\,K_{_{\alpha(n,x),\beta(n)}}\!(X_t)/\mathcal{F}_{t-1}\right)\! = \!E\!\left(E\!\left(\Phi(Y_t)/\mathcal{G}_{t-1}\right)\,K_{_{\alpha(n,x),\beta(n)}}\!(X_t)/\mathcal{F}_{t-1}\right) \vspace*{0.25cm} $\\
       \hspace*{5.25cm}$ \!= \!E\!\left(R(X_t)\,K_{_{\alpha(n,x),\beta(n)}}\!(X_t)/\mathcal{F}_{t-1}\right), \ t\in\N;$
  \item[(ii)] $E\!\left((\Phi(Y_t)- R(X_t))^{^2}K^{^2}_{_{\alpha(n,x),\beta(n)}}\!(X_t)/\mathcal{F}_{t-1}\right)=$ \vspace*{0.1cm}

\hfill $\begin{array}{rl}
& = E\!\left(E\!\left((\Phi(Y_t)- R(X_t))^{^2}/\mathcal{G}_{t-1}\right)K^{^2}_{_{\alpha(n,x),\beta(n)}}\!(X_t)/\mathcal{F}_{t-1}\right)
\end{array}$

\qquad $\begin{array}{rl}
   & = E\!\left((W_2(X_t)- R^{^2}(X_t))K^{^2}_{_{\alpha(n,x),\beta(n)}}\!(X_t)/\mathcal{F}_{t-1}\right) \vspace*{0.25cm}\\
   & = E\!\left(\sigma^{2}(X_t)K^{^2}_{_{\alpha(n,x),\beta(n)}}\!(X_t)/\mathcal{F}_{t-1}\right),\ \ t\in\N.
 \end{array}$
\end{itemize}

We can now present the proofs of the referred theorems.

Let us mention that all the constants appearing hereafter will be denoted generically by $C$.

\subsection{Proofs of main results} \label{DemTeoremas}

\subsubsection{Proof of Theorem \ref{convsupremo}}
\label{DemSupReg}
\noindent We have
$$\begin{array}{rl}
               \underset{x\in \Delta}{\sup}\left|R_n(x) - R(x)\right|\! & \leqslant \left({\underset {x\in \Delta}{\inf}}|D_n(x)|\right)^{-1}\left\{\underset{x\in \Delta}{\sup}\left|N_n(x) - R(x)f(x)\right|+ \right.\vspace*{0.25cm} \\
               & \left.\ \ \ +\ {\underset {x\in \Delta}{\sup}}|R(x)|\ \underset{x\in \Delta}{\sup}\left|D_n(x) - f(x)\right|\right\}.
             \end{array}$$
\noindent Since $${\underset {x\in \Delta}{\inf}}|D_n(x)| \geqslant {\underset {x\in \Delta}{\inf}}f(x)- \underset{x\in \Delta}{\sup}\left|D_n(x) - f(x)\right| $$and $I={\underset {x\in \Delta}{\inf}}f(x)>0,$ it suffices to prove, by Theorem \ref{convsupdens}, that \vspace*{-0.2cm}
\begin{equation}\label{sup1}
 \underset{x\in \Delta}{\sup}\left|N_n(x) - R(x)f(x)\right|\ \overset{a.s.}{\overrightarrow {\underset{n \rightarrow + \infty}}}\ 0.
\end{equation}
\noindent To this end, we remark that
$$\underset{x\in \Delta}{\sup}\left|N_n(x) - R(x)f(x)\right|  \leqslant A_n + B_n,\vspace*{-0.15cm}$$
\noindent with$$
     A_n = \underset{x\in \Delta}{\sup}\left|N_n(x) - \overline{N}_n(x)\right|\ \ \ \ \mbox{and}\ \ \ \ \
     B_n = \underset{x\in \Delta}{\sup}\left|\overline{N}_n(x) - R(x)f(x)\right|.$$
But \vspace*{-0.1cm}
$$\begin{array}{rl}
    B_n\! & \leqslant \underset{x\in \Delta}{\sup} \left|{\displaystyle{\int_{0}^{+ \infty} }} R(y)K_{_{\alpha(n,x),\beta(n)}}\!(y)\,\left(\frac{1}{\,n\,}\overset{n}{\underset{t=1}{\sum }}\,f^{\mathcal{F}_{t-1}}(y)-f(y)\right)\,dy\, \right|+ \vspace*{0.25cm} \\
          &\ \ \ +\ \underset{x\in \Delta}{\sup} \left|{\displaystyle {\int_{0}^{+ \infty}}} R(y)\,K_{_{\alpha(n,x),\beta(n)}}\!(y)\,f(y)\,dy - R(x)f(x)\, \right|  \end{array}$$
          and then
$$\begin{array}{rl}
    B_n\!& \leqslant \left\|\,\frac{1}{\,n\,}\overset{n}{\underset{t=1}{\sum }}\,f^{\mathcal{F}_{t-1}}-f\,\right\|_0\ \underset{y\in \R_0^+}{\sup} |R(y)|\ + \vspace*{0.25cm} \\
          &\ \ \ +\ \underset{x\in \Delta}{\sup} \left|{\displaystyle {\int_{0}^{+ \infty}}} R(y)\,K_{_{\alpha(n,x),\beta(n)}}\!(y)\,f(y)\,dy - R(x)f(x)\, \right|.
  \end{array}$$ By (H2), the first term of the last sum tends $a.s.$ to zero. On the other hand, the uniform continuity of $Rf$ on $\Delta$ and Lemma \ref{lema2Hille} assure the convergence to zero of the second term.

\noindent In what concerns $A_n$, it is bounded by $A_n^+ + A_n^-$, with $A_n^\pm= \underset{x\in \Delta}{\sup}\left|\overset{n}{\underset{t=1}{\sum }}Z_{t,n}^\pm(x)\right|$, where \begin{equation}\label{Ztn+-}
Z_{t,n}^\pm (x)= \frac{1}{\,n\,} \left\{\Phi^\pm(Y_t) K_{_{\alpha(n,x),\beta(n)}}\!(X_t)- E\!\left(\Phi^\pm(Y_t)\, K_{_{\alpha(n,x),\beta(n)}}\!(X_t)/\mathcal{F}_{t-1}\right)\right\} \end{equation}
and
\begin{equation}\label{phi+-}
 \Phi^+(Y_t) = \Phi(Y_t)\ \I_{\left\{|\Phi(Y_t)|\geqslant M_t\right\}},\ \ \ \Phi^-(Y_t) = \Phi(Y_t)-\Phi^+(Y_t),
\end{equation}with $M_t= t^k,\ \ k={\frac{\,1-\theta\,}{2}},\ t\in \N.$

\noindent Hence, by Lemma \ref{convAn+}, the $a.s.$ convergence of $\left(A_n\right)_{n \in \N}$ to zero reduces to showing that\ \ $A_n^-\ \overset{a.s.}{\overrightarrow{\underset{n\rightarrow + \infty}}}\ 0.$

\noindent With this purpose, let us consider $\delta_n= n^{-\lambda}$, $\lambda>\frac{\,3\,}{2}$, and
$$\nu_n = \left\{\begin{array}{ll}
                 \frac{\,b-a\,}{\delta_n}, &\ \ \frac{\,b-a\,}{\delta_n} \in \N \vspace*{0.2cm}\\
                 \left[\frac{\,b-a\,}{\delta_n}\right] + 1, &\ \ \frac{\,b-a\,}{\delta_n} \notin \N,
              \end{array}\right.
$$ where $ \left[u\right]$ denotes the integer part of the real number $u$.

\noindent Partitioning  $\Delta$ into the intervals $$\Delta_{j,n} = \left[a+(j-1) \delta_n,a+j \delta_n\right[,\ j \in \left\{1,\ldots,\nu_n - 1\right\},\ \ \Delta_{\nu_n,n}  = \left[a+(\nu_n - 1) \delta_n,b\right],$$ we may write
$$\textstyle A_n^-  =\underset{1\leqslant j \leqslant \nu_n}{\max}\ \underset{x \in \Delta_{j,n}}{\sup}\ \left|\frac{1}{\,n\,}\overset{n}{\underset{t=1}{\sum }}Z_{t,n}^-(x)\right|\ \leqslant\ A^-_{1,n} + A^-_{2,n},$$
\noindent with
$$\textstyle A^-_{1,n}\! =\!\! \underset{1\leqslant j \leqslant \nu_n}{\max}\ \underset{x \in \Delta_{j,n}}{\sup}\! \left|\frac{1}{\,n\,}\overset{n}{\underset{t=1}{\sum }}\!\left(Z_{t,n}^-\!(x)\!-\!Z_{t,n}^-\!\left(x_{j,n}\right)\right)\right|\,\ \mbox{and}\ \, A^-_{2,n}\! =\!\! \underset{1\leqslant j \leqslant \nu_n}{\max}\! \left|\frac{1}{\,n\,}\overset{n}{\underset{t=1}{\sum }}Z_{t,n}^-\!\left(x_{j,n}\right)\right|,$$

\vspace*{0.2cm}

\noindent $x_{j,n}$ being an arbitrary point in $\Delta_{j,n},\ j=1,\ldots,\nu_n$.

\vspace*{0.1cm}

\noindent As for $A^-_{1,n}$, note that, for sufficiently large $n$ and $j\in\{1,\ldots,\nu_n\}$,\vspace*{0.1cm}

\noindent $\left|\frac{1}{\,n\,}\ \overset{n}{\underset{t=1}{\sum }}\left(Z_{t,n}^-(x)-Z_{t,n}^-\left(x_{j,n}\right)\right)\right|\ \leqslant$

\noindent \hfill $\begin{array}{l}

                   \leqslant\ \frac{1}{\,n\,}\ \overset{n}{\underset{t=1}{\sum }}\left|\Phi^-(Y_t)\right|\,|K_{_{\alpha(n,x),\beta(n)}}\!(X_t)- K_{_{\alpha (n,x_{j,n}),\beta(n)}}\!(X_t)|\ + \vspace*{0.25cm}\\
                    \ \ \ +\ \frac{1}{\,n\,}\ \overset{n}{\underset{t=1}{\sum }}\,E\!\left(\left|\Phi^-(Y_t)\right|\,|K_{_{\alpha(n,x),\beta(n)}}\!(X_t)- K_{_{\alpha(n,x_{j,n}),\beta(n)}}\!(X_t)| /\mathcal{F}_{t-1}\right) \vspace*{0.25cm}\\
                   \leqslant\ \frac{\,C\,}{n}\ \overset{n}{\underset{t=1}{\sum }}\, M_t\ \frac{\ |x-x_{j,n}|\ }{\sqrt{h_n^3\,}},
\end{array}
$ \vspace*{0.25cm}

\noindent by the gamma kernel properties (c.f. Lemma \ref{Klipschiz}).

\noindent Applying the ergodic theorem, we have
\begin{center}
$\frac{\,1\,}{n}\ \overset{n}{\underset{t=1}{\sum }} \Big(E\!\left(\left|\Phi(Y_t)\right|\right)+E\!\left(\left|\Phi(Y_t)\right| /\mathcal{F}_{t-1}\right)\Big)\ \overset{a.s.}{\overrightarrow {\underset{n \rightarrow + \infty}}}\ 2\,E\left(\left|\Phi(Y_1)\right|\right).$
\end{center}
Thus, $\textstyle A^-_{1,n} = O\left(\frac{\,\delta_n\,}{\,\sqrt{h_n^3\,}\,}\right)\ a.s.$ and, taking into account the choice of $\delta_n$,
$$A^-_{1,n}\ \overset{a.s.}{\overrightarrow {\underset{n \rightarrow + \infty}}}\ 0.$$

\noindent Now we study the behaviour of $A^-_{2,n}$. In order to apply Azuma's inequality, we must find an upper bound for $\left|Z_{t,n}^-\left(x_{j,n}\right)\right|,\ t \in \{1,\ldots,n\},\ j \in \{1,\ldots,\nu_n\}$.

\noindent Using the fact that, for $x>0$,  
\begin{equation}\label{majK}
\textstyle K_{_{\alpha(n,x),\beta(n)}}\!(X_t) \leqslant \frac{C}{\,\sqrt{\,x\,h_n\,}\,} 
\end{equation}
(c.f. Shi and Song (\cite{ShiSong:15}), p. 3505, (5.20)), we get 
\begin{center}
$\forall\, t \in \{1,\ldots,n\},\ \ \forall\,x \in \Delta,\ \ \left|Z_{t,n}^- (x)\right| \leqslant\ 2\, |\Phi^-(Y_t)|\, K_{_{\alpha(n,x),\beta(n)}}\!(X_t)  \leqslant\ C\, \frac{M_n}{\,\sqrt{h_n\,}\,}.$
\end{center}
\noindent Consequently,
\noindent $$
\textstyle \forall\, \varepsilon > 0,\ \ P\!\left(\!A^-_{2,n} > \varepsilon\!\right)\leqslant \overset{\nu_n}{\underset{j=1}{\sum }} P\!\left(\Big|\overset{n}{\underset{t=1}{\sum }}Z_{t,n}^-\left(x_{j,n}\right)\Big| > n \varepsilon\!\right)=O\!\left(\nu_n\,\exp\left(\!-\,\frac{\,C\,n\,h_n\,}{\,M_n^2\,}\!\right)\right).
$$
\noindent The condition ${\underset{n \rightarrow + \infty}{\lim}} \frac{\,n^\theta\,h_n\,}{\,\log n\,} = +\infty$  yields the $a.s.$ convergence of $A^-_{2,n}$ to zero, as $n\rightarrow+\infty$, via the Borel-Cantelli lemma.

\subsubsection{Proof of Theorem \ref{convsupdens}}
\label{DemSupDen}

\noindent The proof is performed over the same steps that Theorem \ref{convsupremo} by taking $\Phi=1$ (and thus $R= 1$) and considering the same partition of $\Delta$. In this case, we obtain

\centerline{$\left|A^-_{1,n}\right| = O\left(\frac{\delta_n}{\,\sqrt{h_n^3\,}\,}\right)\ a.s.$\ \ and\ \ $P\left(A^-_{2,n} > \varepsilon\right) = O\left(\nu_n\, \exp\left(-C n h_n\right)\right)$,\ \ $\varepsilon>0$.}

\subsubsection{Proof of Theorem \ref{asympNormal}}
\label{DemTLCReg}
\noindent Let us decompose
$$R_n(x) - R(x) = \frac{\ N_{1,n}(x)+N_{2,n}(x)\ }{D_n(x)},\ \ \ \ x\geqslant0,$$ where
\begin{equation}\label{N1n(x)}
  \displaystyle N_{1,n}(x) = (N_n(x) - R(x) D_n(x))-(\,\overline{N}_n(x)- R(x) \overline{D}_n(x)), \vspace*{-0.2cm}
\end{equation}
\begin{equation}\label{N2n(x)}
 N_{2,n}(x) = \overline{N}_n(x)- R(x) \overline{D}_n(x).
\end{equation}

\noindent The following notations will be used hereafter. For $t \in \{1,\ldots,n\}$,
$$ U_{t,n}(x)=V_{t,n}(x) - E\!\left(V_{t,n}(x)/\mathcal{F}_{t-1}\right),\ \ \ x \geqslant 0,$$ and
$$ V_{t,n}(x) = \left\{\begin{array}{ll}
                         \frac{\,\sqrt[4]{h_n}\,}{\,\sqrt{n}\,}\,\left(\Phi(Y_t)- R(x)\right)K_{_{\alpha(n,x),\beta(n)}}\!(X_t), &\ \ \ x>0 \vspace*{0.35cm}\\
                         \frac{\,\sqrt{h_n}\,}{\,\sqrt{n}\,}\,\left(\Phi(Y_t)- R(0)\right)K_{_{\alpha(n,0),\beta(n)}}\!(X_t), &\ \ \ x=0 .
                       \end{array}\right.
$$

\noindent a) Consider $x>0$.

\noindent Since\ \ $D_n(x)\ \overset{P}{\overrightarrow{\underset{n\rightarrow + \infty}}}\ f(x)$\ \ and\ \  $\sqrt{n\,\sqrt{h_n\,}\,}\ N_{2,n}(x)\overset{P}{\overrightarrow{\underset{n\rightarrow + \infty}}}\ 0$, by Lemmas \ref{ConvPDn(x)} a) and \ref{ConvN2n(x)} a), respectively, we only need to show that
\begin{equation}\label{ConvN1n}
 \textstyle \sqrt{n\,\sqrt{h_n\,}\,}\ N_{1,n}(x)\ \overset{\mathcal{L}}{\overrightarrow{\underset{n\rightarrow + \infty}}}\ \mathcal{N}\left(0, \frac{\ \sigma^2(x)\,f(x)\ }{2\,\sqrt{\pi\,x\,}}\right).
\end{equation}
\noindent The fact that $\sqrt{n\,\sqrt{h_n\,}\,}N_{1,n}(x) = \overset{n}{\underset{t=1}{\sum }}\,U_{t,n}(x)$ and, for each $n \in \N$, $\left(U_{t,n}(x)\right)_{t \in \{1,\ldots,n\}}$ is a martingale difference with respect to the filtration $\left(\mathcal{F}_{t-1}\right)_{t\in\{1,\ldots,n\}}$ allow us to apply the central limit theorem for discrete time martingales. So, according to Hall and Heyde ((\cite{HallHeide:80}), p. 58), we must prove that
\begin{equation}\label{(1)}
\textstyle\overset{n}{\underset{t=1}{\sum }}\,E\!\left(U_{t,n}^{^2}(x) / \mathcal{F}_{t-1}\right)\ \overset{P}{\overrightarrow{\underset{n\rightarrow + \infty}}}\ \frac{\ \sigma^{2}(x)\,f(x)\ }{2\,\sqrt{\pi\,x\,}},\vspace*{-0.2cm}
\end{equation}
\begin{equation}\label{(2)}
\textstyle \forall\, \varepsilon > 0,\ \ \ n\,E\!\left(U_{t,n}^{^2}(x)\,\mathbb{I}_{\left\{\left|U_{t,n}(x)\right|> \varepsilon \right\}}\right) = o(1).\end{equation}
As (\ref{(2)}) is established in Lemma \ref{TLCcond2} a), the proof is reduced to checking (\ref{(1)}).
Based on the equality $$E\!\left(U_{t,n}^{^2}(x)/\mathcal{F}_{t-1}\right)=E\!\left(V_{t,n}^{^2}(x)/\mathcal{F}_{t-1}\right) - E^{^2}\!\left(V_{t,n}(x)/\mathcal{F}_{t-1}\right),\ \ \ \ t = 1, \ldots,n,$$ it suffices to show that
\begin{equation}\label{1º}
 \textstyle \overset{n}{\underset{t=1}{\sum }}\,E\!\left(V_{t,n}^{^2}(x) / \mathcal{F}_{t-1}\right)\ \overset{P}{\overrightarrow{\underset{n\rightarrow + \infty}}}\ \frac{\ \sigma^{2}(x)\,f(x)\ }{2\,\sqrt{\pi\,x\,}}, \vspace*{-0.2cm}
\end{equation}
 \begin{equation}\label{2º}
 \textstyle \overset{n}{\underset{t=1}{\sum }}\,E^{^2}\!\left(V_{t,n}(x) / \mathcal{F}_{t-1}\right)\ \overset{P}{\overrightarrow{\underset{n\rightarrow + \infty}}}\ 0.
 \end{equation}
\noindent Let us begin by pointing out that
$$\begin{array}{rl}
E\!\left(V_{t,n}^{^2}(x) / \mathcal{F}_{t-1}\right)\!& =\frac{\,\sqrt{h_n\,}\,}{n}\,E\!\left((\Phi(Y_t)-R(X_t))^{^2}
                                                         K^{^2}_{_{\alpha(n,x),\beta(n)}}\!(X_t)/\mathcal{F}_{t-1}\right) +\vspace*{0.25cm}\\
                                                         &\ \ \ +\,\frac{\,\sqrt{h_n\,}\,}{n}\, E\!\left((R(X_t)-R(x))^{^2}K^{^2}_{_{\alpha(n,x),\beta(n)}}\!(X_t)/
                                                         \mathcal{F}_{t-1}\right),
 \end{array}
$$ since  $E\!\left(\Phi(Y_t)- R(X_t)/\mathcal{G}_{t-1}\right) = 0$ (c.f. (i), p. 9).

\vspace*{0.2cm}

\noindent Hence, by (ii) (c.f. p. 9),\ \ $\overset{n}{\underset{t=1}{\sum }}E\!\left(V_{t,n}^{^2}(x) / \mathcal{F}_{t-1}\right)$ is equal to 
\begin{center}
$\frac{\,\sqrt{h_n}\,}{n} \overset{n}{\underset{t=1}{\sum }}E\!\left(K^{^2}_{_{\alpha(n,x),\beta(n)}}\!(X_t)\Big(\!\sigma^{2}(X_t) \!+ \!(R(X_t)\!-\!R(x))^{^2}\Big)/\mathcal{F}_{t-1}\right)
 \! =\! L_{1,n}(x) + L_{2,n}(x),$
\end{center} \vspace*{0.1cm}

\noindent with

\vspace*{0.1cm}

\noindent  $L_{1,n}(x) \!= \!\sqrt{h_n}\! {\displaystyle \int_{0}^{+\infty}}\!\!\!K^{^2}_{_{\alpha(n,x),\beta(n)}}\!(y)\!\left(\!\sigma^2(y)\!+\!(R(y)\!-\!R(x))^{^2}\!\right)\!
                                                         \left(\!\frac{1}{\,n\,}\overset{n}{\underset{t=1}{\sum }}\!f^{\mathcal{F}_{t-1}}(y)\!-\!f(y)\!\right)dy,$\vspace*{0.1cm}

\noindent $L_{2,n}(x) = \sqrt{h_n\,}\, {\displaystyle \int_{0}^{+\infty}}K^{^2}_{_{\alpha(n,x),\beta(n)}}\!(y)\left(\sigma^2(y)+(R(y)-R(x))^{^2}\right)f(y)\,dy.$

\vspace*{0.25cm}

\noindent Next, note that $\left|L_{1,n}(x) \right|$ is bounded by \vspace*{0.2cm}
\begin{center}
$
\left\|\! \frac{1}{\,n\,}\overset{n}{\underset{t=1}{\sum }}\,f^{\mathcal{F}_{t-1}}-f\right\|_0\!\! \sqrt{h_n\,}\,B(2,n,x)\, E\!\left(\sigma^{2}\!\left(G_{(2,n,x)}\right) \!+ \!\left(R\!\left(G_{(2,n,x)}\right)\!-\!R(x)\right)^{^2}\right)
$
\end{center} \vspace*{0.2cm}
and we may rewrite $L_{2,n}(x)$ in the form
$$\textstyle
 L_{2,n}(x)\! = \! \sqrt{h_n\,}\,B(2,n,x)\,E\!\left(\left(\sigma^{2}\!\left(G_{(2,n,x)}\right) + \left(R\!\left(G_{(2,n,x)}\right)-R(x)\right)^{^2}\right)f\!\left(G_{(2,n,x)}\right)\right).
$$

\noindent From (H6)(iii), Lemmas \ref{lema2Hille} and \ref{lema1} b), it follows that $\sqrt{h_n\,}\,B(2,n,x)\,{\overrightarrow{\underset{n\rightarrow + \infty}}}\,\frac{1}{\,2\,\sqrt{\pi\,x\,}\,}$ and $$\underset{n\rightarrow + \infty}{\lim}E\!\left(\left(\sigma^{2}\!\left(G_{(2,n,x)}\right) + \left(R\!\left(G_{(2,n,x)}\right)-R(x)\right)^{^2}\right)\,f\!\left(G_{(2,n,x)}\right)\right)=\sigma^{2}(x)\,f(x),$$
which concludes the proof of (\ref{1º}), by hypothesis (H2).

\noindent In order to obtain (\ref{2º}), we shall prove the convergence in mean to zero of the same sequence. As  \vspace*{-0.2cm}
\begin{center}
$\overset{n}{\underset{t=1}{\sum }}\,E^{^2}\!\left(V_{t,n}(x) / \mathcal{F}_{t-1}\right)\leqslant \overset{n}{\underset{t=1}{\sum }}\,\frac{\,\sqrt{h_n\,}}{n}\, E\!\left(\left[R(X_t)- R(x)\right]^{^2}K^{^2}_{_{\alpha(n,x),\beta(n)}}\!(X_t)/\mathcal{F}_{t-1}\right),$
\end{center}

\noindent we have \vspace*{0.2cm}

\noindent$\begin{array}{rl}
E\!\left(\overset{n}{\underset{t=1}{\sum }}E^{^2}\!\left(V_{t,n}(x) / \mathcal{F}_{t-1}\right)\!\right)\!\!& \leqslant \sqrt{h_n\,}\, {\displaystyle { \int_{0}^{+\infty}}} \,(R(y)- R(x))^{^2}\,K^{^2}_{_{\alpha(n,x),\beta(n)}}\!(y)\ f(y)\,dy
                                          \vspace*{0.25cm}\\
&=\sqrt{h_n\,} \, B(2,n,x)\,E\left( \!\left(R\!\left(G_{(2,n,x)}\right)\!-\! R(x)\right)\!^{^2}\,f\!\left(G_{(2,n,x)}\right)\right)\!.
\end{array}$

\vspace*{0.2cm}

\noindent Once again, the announced result is implied by Lemmas \ref{lema2Hille} and \ref{lema1} b).

\noindent {\bf b)} For $x=0$, the proof follows the same lines as the previous one by replacing $\sqrt{h_n\,}$ by $h_n$, noting that $B(2,n,0) = \frac{1}{\,2\,h_n\,}$ and applying Lemma \ref{lema2Hille}.

\subsubsection{Proof of Theorem \ref{TeorTLCdensity}}

\noindent Let us consider the decomposition
\begin{center}
$D_n(x) - D(x) = D_{1,n}(x)+D_{2,n}(x),\ \ \ \ x\geqslant0,$
\end{center} \vspace*{-0.25cm}
 where
\begin{equation}\label{(3)}
D_{1,n}(x) = D_n(x) - \overline{D}_n(x),\ \ \ \ \ D_{2,n}(x) = \overline{D}_n(x)-f(x).
\end{equation}

\noindent Again, our strategy is to prove the asymptotic normality of $\left(\sqrt{n\,\sqrt{h_n\,}}\,D_{1,n}(x)\right)_{n\in \N}$ and that \ $\sqrt{n\,\sqrt{h_n\,}}\,D_{2,n}(x)\ \overset{P}{\overrightarrow{\underset{n\rightarrow + \infty}}}\ 0$.

\noindent In view of Remark \ref{D2,n(x).Dens}, we proceed with the study of  the sequence involving $\,D_{1,n}(x)$.

\noindent As before, $ U_{t,n}(x)=V_{t,n}(x) - E\!\left(V_{t,n}(x)/\mathcal{F}_{t-1}\right)$,\ \ $x \geqslant 0,$  considering
\begin{equation}\label{Vtn(x)DENSIDADE}
 V_{t,n}(x) = \left\{\begin{array}{ll}
                         \frac{\,\sqrt[4]{h_n}\,}{\,\sqrt{n}\,}\,K_{_{\alpha(n,x),\beta(n)}}\!(X_t), &\ \ \ x>0 \vspace*{0.35cm}\\
                         \frac{\,\sqrt{h_n}\,}{\,\sqrt{n}\,}\,K_{_{\alpha(n,0),\beta(n)}}\!(X_t), &\ \ \ x=0 .
                       \end{array}\right.
\end{equation}

\noindent a) Let $x>0$.

\noindent To apply the central limit theorem for discrete time martingales, we write \linebreak $\sqrt{n\,\sqrt{h_n\,}\,}D_{1,n}(x) = \overset{n}{\underset{t=1}{\sum }}\,U_{t,n}(x)$ and, invoking Remark \ref{Vt,n.Dens}, we restrict ourselves to showing that
$$\textstyle \overset{n}{\underset{t=1}{\sum }}\,E\!\left(U_{t,n}^{^2}(x) / \mathcal{F}_{t-1}\right)\ \overset{P}{\overrightarrow{\underset{n\rightarrow + \infty}}}\ \frac{\,f(x)\,}{\ 2\,\sqrt{\pi\,x\,}\ }.$$

\noindent Once more, this result follows from
\begin{equation*} \textstyle
  \overset{n}{\underset{t=1}{\sum }}\,E\!\left(V_{t,n}^{^2}(x) / \mathcal{F}_{t-1}\right)\ \overset{P}{\overrightarrow{\underset{n\rightarrow + \infty}}}\ \frac{\,f(x)\,}{2\,\sqrt{\pi\,x\,}}\ \ \ \textup{and}\ \ \
  \overset{n}{\underset{t=1}{\sum }}\,E^{^2}\!\left(V_{t,n}(x) / \mathcal{F}_{t-1}\right)\ \overset{P}{\overrightarrow{\underset{n\rightarrow + \infty}}}\ 0.
 \end{equation*}
As in Theorem 3.3, we then write   $\,\overset{n}{\underset{t=1}{\sum }}E\!\left(V_{t,n}^{^2}(x) / \mathcal{F}_{t-1}\right) = D^a_{1,n}(x) + D^b_{1,n}(x),$ with

$D^a_{1,n}(x) = \sqrt{h_n}\, {\displaystyle \int_{0}^{+\infty}} K^{^2}_{_{\alpha(n,x),\beta(n)}}\!(y)
                                                         \left(\!\frac{1}{\,n\,}\overset{n}{\underset{t=1}{\sum }}\,f^{\mathcal{F}_{t-1}}(y)-f(y)\right)dy,$

$D^b_{1,n}(x) = \sqrt{h_n\,}\, {\displaystyle \int_{0}^{+\infty}}K^{^2}_{_{\alpha(n,x),\beta(n)}}\!(y)\,f(y)\,dy.$

\vspace*{0.2cm}

\noindent Noting that \vspace*{-0.25cm}
\begin{center}
$\left|D^a_{1,n}(x) \right| \leqslant \sqrt{h_n\,}\ \left\|\frac{1}{\,n\,}\overset{n}{\underset{t=1}{\sum }}\,f^{\mathcal{F}_{t-1}}-f\right\|_0 \ \underset{y\in \R_0^+}{\sup} K_{_{\alpha(n,x),\beta(n)}}\!(y) = o(1)\ \ \ a.s.,$
\end{center} \vspace*{-0.5cm}
by (\ref{majK}), and \vspace*{-0.2cm}
\begin{center}
$ D^b_{1,n}(x) = \sqrt{h_n\,}\,B(2,n,x)\,E\!\left(f\left(G_{(2,n,x)}\right)\right)\ {\overrightarrow{\underset{n\rightarrow + \infty}}}\ \frac{\,f(x)\,}{\ 2\,\sqrt{\pi\,x\,}\ },$\vspace*{-0.2cm}
\end{center}
we have $$\textstyle D^a_{1,n}(x) + D^b_{1,n}(x)\ {\overrightarrow{\underset{n\rightarrow + \infty}}}\ \frac{\,f(x)\,}{\ 2\,\sqrt{\pi\,x\,}\ }\ a.s.. $$
\noindent In order to study the behaviour of $\,\,\left( \overset{n}{\underset{t=1}{\sum }}\,E^{^2}\!\left(V_{t,n}(x) / \mathcal{F}_{t-1}\right)\right)_{n \in \N}\,,$ observe that the Cauchy-Schwarz inequality leads to \vspace*{-0.1cm}
\begin{center}
$\begin{array}{rl}
\overset{n}{\underset{t=1}{\sum }}\,E^{^2}\left(V_{t,n}(x) / \mathcal{F}_{t-1}\right)&\leqslant \frac{\sqrt{h_n}}{\,n\,}\,\overset{n}{\underset{t=1}{\sum }}\,\displaystyle {\int_{0}^{+\infty}}K_{_{\alpha(n,x),\beta(n)}}(y)\,\left(f^{\mathcal{F}_{t-1}}(y)\right)^2\,dy \vspace*{0.25cm}\\
&= \sqrt{h_n} {\displaystyle \int_{0}^{+\infty}} K_{_{\alpha(n,x),\beta(n)}}(y)\,\left(\frac{1}{\,n\,}\overset{n}{\underset{t=1}{\sum }} \left(f^{\mathcal{F}_{t-1}}(y)\right)^2\right)\,dy.
\end{array}$
\end{center}
Therefore, denoting by $\,f^* \in C_0(\R)\,$ the limit of $\,\left(\frac{1}{\,n\,}\overset{n}{\underset{t=1}{\sum }}\,\left(f^{\mathcal{F}_{t-1}}\right)^2\right)_{\!\!n \in \N}\,,$ the last term is bounded above by
$$\textstyle \sqrt{h_n}\left\| \frac{1}{\,n\,}\overset{n}{\underset{t=1}{\sum }}\,\left(f^{\mathcal{F}_{t-1}}\right)^2-f^*\right\|_0 +\sqrt{h_n\,}\left|E\left(f^*\left(G_{(1,n,x)}\right)\right)\right|,$$
\noindent which tends to zero, as $n \rightarrow +\infty,\,$ by Lemmas 4.2 and 4.1 b).

\noindent b) For $x=0$, the proof is straightforward by making the adequate substitutions.

 \subsection{Auxiliary results}
Consider $p$, $n$ and $x$ arbitrary numbers in $\left]0,+\infty\right[$, $\N$ and $\R_0^+$, respectively. For convenience, $G_p \equiv G_{(p,n,x)}$ will denote, in the sequel, a real random variable following the gamma distribution with shape parameter $ \frac{\,p\,x\,}{h_n}+1$ and scale parameter $\frac{\,h_n\,}{p}$.

\begin{lemma} \label{lema1}
Let $\varphi$ be a real function defined on $\R_0^+$ and $T$ a real random variable with density function $g$. If $p \in [1,+\infty[$ and $E\!\left( \varphi(T) K_{_{\alpha(n,x),\beta(n)}}^{^p} \!(T) \right)$ exists, we have: \vspace*{-0.1cm}
\begin{enumerate}
 \item[a)] $E\!\left( \varphi(T) K_{_{\alpha(n,x),\beta(n)}}^{^p} \!(T) \right) = B(p,n,x)\,E\!\left( \varphi(G_p) g(G_p) \right)$,\ \ where \vspace*{-0.3cm} \begin{center}
$B(p,n,x) = \frac{\Gamma\!\left(\frac{\,p\,x\,}{h_n}+1\right)}{\Gamma^{^{p}}\!\left(\frac{x}{\,h_n\,}+1\right) p^{\frac{\,p\,x\,}{h_n}+1} \, h_n^{p-1}} ;$
\end{center}
 \item[b)] for $x>0$,\ \ \ $ \underset{n\rightarrow +\infty}{\lim}\,h_n^{\frac{\,p-1\,}{2}} B(p,n,x) = \frac{1}{\,\sqrt{p\,}\,\left(\sqrt{2\,\pi\,x\, \,}\right)^{p-1}\,} $.
\end{enumerate}
\end{lemma}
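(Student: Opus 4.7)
\textbf{Proof plan for Lemma \ref{lema1}.}

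My plan is to handle (a) by direct computation, rewriting the $p$-th power of the gamma kernel as a constant multiple of the density of the auxiliary variable $G_p$, and then handle (b) via Stirling's formula applied to the ratio of Gamma functions.

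\textbf{Part (a).} Writing out the gamma density with $\alpha=x/h_n+1$ and $\beta=h_n$, I get
$$K^p_{\alpha(n,x),\beta(n)}(y) \;=\; \frac{1}{\Gamma(x/h_n+1)^{p}\, h_n^{\,p(x/h_n+1)}}\; y^{\,px/h_n}\, e^{-py/h_n}\,\I_{[0,+\infty[}(y).$$
On the other hand, the density of $G_p$ (gamma, shape $px/h_n+1$, scale $h_n/p$) is
$$g_{G_p}(y)\;=\;\frac{1}{\Gamma(px/h_n+1)\,(h_n/p)^{\,px/h_n+1}}\;y^{\,px/h_n}\,e^{-py/h_n}\,\I_{[0,+\infty[}(y).$$
Dividing these two expressions, the factors $y^{px/h_n}e^{-py/h_n}$ cancel exactly, and the remaining $h_n$-powers reduce by $h_n^{px/h_n+1}/h_n^{p(x/h_n+1)}=h_n^{1-p}$, giving $K^p_{\alpha(n,x),\beta(n)}(y)=B(p,n,x)\,g_{G_p}(y)$ with $B(p,n,x)$ exactly as stated. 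The conclusion then follows by substitution in
$$E\!\left(\varphi(T)K^{p}_{\alpha(n,x),\beta(n)}(T)\right)\;=\;\int_{0}^{+\infty}\varphi(y)\,K^{p}_{\alpha(n,x),\beta(n)}(y)\,g(y)\,dy,$$
since replacing $K^{p}_{\alpha(n,x),\beta(n)}(y)$ by $B(p,n,x)\,g_{G_p}(y)$ turns the integral into $B(p,n,x)\,E(\varphi(G_p)g(G_p))$.

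\textbf{Part (b).} For $x>0$ one has $x/h_n\to+\infty$, so I invoke Stirling in the form $\Gamma(z+1)\sim\sqrt{2\pi z}\,z^{z}e^{-z}$ for $z\to+\infty$, applied to $z=px/h_n$ in the numerator and $z=x/h_n$ in the denominator (raised to the $p$-th power). The $(x/h_n)^{x/h_n}$ and $e^{-x/h_n}$ factors then combine to cancel a factor of $p^{px/h_n}$ against the explicit $p^{px/h_n+1}$ in $B(p,n,x)$, leaving
$$B(p,n,x)\;\sim\;\frac{\sqrt{2\pi p x/h_n}}{(2\pi x/h_n)^{p/2}\,p\,h_n^{\,p-1}}\;=\;\frac{1}{\sqrt{p}}\,\frac{(2\pi x)^{(1-p)/2}}{h_n^{(p-1)/2}}.$$
Multiplying by $h_n^{(p-1)/2}$ yields the announced limit.

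\textbf{Main obstacle.} There is no conceptual difficulty; the whole lemma reduces to bookkeeping. The only delicate point is tracking the exponents of $h_n$ and $p$ carefully when simplifying the Stirling expansion in (b), in particular making sure that the factors $(x/h_n)^{px/h_n}$ and $e^{-px/h_n}$ in numerator and denominator cancel exactly so that only the polynomial-in-$1/h_n$ prefactors from $\sqrt{2\pi z}$ survive.
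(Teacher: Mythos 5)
Your proposal is correct and follows essentially the same route as the paper: part (a) is the same direct identification of $K^p_{\alpha(n,x),\beta(n)}$ as $B(p,n,x)$ times the density of $G_p$ (the paper simply calls this trivial), and part (b) is Stirling's formula, which the paper packages through the normalized ratio $S(z)=\sqrt{2\pi}\,e^{-z}z^{z+1/2}/\Gamma(z+1)\to 1$ rather than expanding the Gamma functions directly. Your exponent bookkeeping in (b) checks out, so there is nothing to add.
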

\begin{proof} The proof of a) is trivial. As for b), the proof is performed over the same steps of Chen (c.f. (\cite{Chen:00}), p. 474, (3.2) and (3.3)), noting that $$\textstyle \forall\, p \in \left[1,+\infty \right[,\ \ \forall x>0,\ \ \ B(p,n,x) = \frac{S^{^{\,p}}\!\!\left(\frac{\,x\,}{h_n}\right)}
                  {\,S\!\left(\frac{\,p\,x\,}{h_n}\right)\, \left(\sqrt{2\,\pi\,x\,h_n\,}\right)^{p-1}\,\sqrt{p\,}\,} ,$$
with\ $S(z) = \frac{\,\sqrt{2\,\pi\,}\,e^{-z}\,z^{z+\frac{1}{\,2\,}}\,}{\Gamma(z+1)},\ z \!\geqslant\! 0$,\ and taking into account the properties of $S$. \vspace*{0.25cm}
\end{proof}

\begin{remark}\textup{ Notice that, for $x=0$, the result corresponding to b) is obvious since\ \ $h_n^{p-1} B(p,n,0)= \frac{1}{\,p\,}$.} \vspace*{0.25cm}
\end{remark}

\begin{lemma}\label{lema2Hille} If $p>0$, we have $\underset{n\rightarrow +\infty}{\lim}E(\ell(G_{(p,n,x)})) = \ell(x)$, for every real function $\ell$ defined on $\R_0^+$, continuous and bounded.
The convergence is uniform in every interval in which $\ell$ is uniformly continuous.\vspace*{-0.25cm}
\end{lemma}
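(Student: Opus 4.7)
My plan is to reduce the statement to a weak-law-of-large-numbers type argument: show that $G_{(p,n,x)}$ converges to $x$ in $L^2$ as $n\to+\infty$, and then exploit the continuity and boundedness of $\ell$ in the usual way.

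\textbf{Step 1 (moments of $G_{(p,n,x)}$).} Since $G_{(p,n,x)}$ has a gamma distribution with shape $\frac{px}{h_n}+1$ and scale $\frac{h_n}{p}$, its mean and variance are
$$
E\!\left(G_{(p,n,x)}\right)\;=\;\left(\tfrac{px}{h_n}+1\right)\tfrac{h_n}{p}\;=\;x+\tfrac{h_n}{p},\qquad
V\!\left(G_{(p,n,x)}\right)\;=\;\left(\tfrac{px}{h_n}+1\right)\tfrac{h_n^{2}}{p^{2}}\;=\;\tfrac{h_n x}{p}+\tfrac{h_n^{2}}{p^{2}}.
$$
Combining bias and variance,
$$
E\!\left[(G_{(p,n,x)}-x)^{2}\right]\;=\;\tfrac{h_n x}{p}+\tfrac{2\,h_n^{2}}{p^{2}},
$$
which tends to $0$ as $n\to+\infty$, and does so uniformly on any bounded subset of $\R_0^+$. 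By Chebyshev's inequality, for every $\delta>0$,
$$
P\!\left(\left|G_{(p,n,x)}-x\right|>\delta\right)\;\leqslant\;\frac{1}{\delta^{2}}\left(\tfrac{h_n x}{p}+\tfrac{2\,h_n^{2}}{p^{2}}\right),
$$
so $G_{(p,n,x)}\to x$ in probability, with the rate uniform in $x$ over any bounded interval.

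\textbf{Step 2 (transfer to $\ell$).} Fix $x\in\R_0^+$ and $\eta>0$. Using the continuity of $\ell$ at $x$, choose $\delta>0$ so that $|y-x|\leqslant\delta$ implies $|\ell(y)-\ell(x)|\leqslant\eta$. Then split the expectation according to whether $|G_{(p,n,x)}-x|$ is below or above $\delta$:
$$
\left|E\!\left(\ell(G_{(p,n,x)})\right)-\ell(x)\right|\;\leqslant\;\eta\;+\;2\,\|\ell\|_{\infty}\,P\!\left(\left|G_{(p,n,x)}-x\right|>\delta\right).
$$
Letting $n\to+\infty$, Step 1 makes the second summand vanish, and since $\eta$ is arbitrary, the pointwise convergence follows.

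\textbf{Step 3 (uniform version).} If $\ell$ is uniformly continuous on some interval $J\subset\R_0^+$, then for every $\eta>0$ we may select $\delta>0$ so that the implication $|y-x|\leqslant\delta\Rightarrow|\ell(y)-\ell(x)|\leqslant\eta$ holds simultaneously for every $x\in J$. Since the Chebyshev bound in Step 1 is uniform in $x$ on the bounded set $J$, the same inequality as in Step 2 yields $\sup_{x\in J}|E(\ell(G_{(p,n,x)}))-\ell(x)|\leqslant\eta+o(1)$, which establishes the uniform convergence on $J$.

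The argument is entirely routine once the moments of $G_{(p,n,x)}$ are written down; the only mildly delicate point is checking that the $L^{2}$ rate $h_n x/p+2h_n^{2}/p^{2}$ is indeed uniform in $x$ on bounded intervals, which is immediate from its linear dependence on $x$.
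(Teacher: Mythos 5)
Your proof is correct, but it is worth noting that the paper does not actually write out an argument here: it simply refers the reader to Chaubey et al.\ and to Feller (Vol.~2, p.~219), where this statement appears as the classical law-of-large-numbers/Hille-type approximation theorem. What you have done is reconstruct, self-containedly, essentially the argument behind that cited result: compute the mean $x+h_n/p$ and variance $h_nx/p+h_n^2/p^2$ of the gamma variable, deduce $E[(G_{(p,n,x)}-x)^2]=h_nx/p+2h_n^2/p^2\to0$, apply Chebyshev, and transfer to $\ell$ by the standard continuity split. Your moment computations check out, and the pointwise and uniform-on-compacts conclusions follow correctly. The advantage of your route is that the reader need not chase the references and sees exactly which property of the gamma kernel (concentration of $G_{(p,n,x)}$ around $x$ at rate $h_n$) drives the lemma; the paper's citation buys brevity at the cost of opacity.

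One small point of precision: your Step 3 genuinely requires the interval $J$ to be bounded, since the Chebyshev bound $\frac{1}{\delta^2}\bigl(\frac{h_nx}{p}+\frac{2h_n^2}{p^2}\bigr)$ grows linearly in $x$ and is therefore not uniformly small over an unbounded set (and indeed, for $x$ of order $h_n^{-2}$ the standard deviation of $G_{(p,n,x)}$ diverges, so uniformity over all of $\R_0^+$ can fail even for bounded uniformly continuous $\ell$). The lemma's phrase ``every interval in which $\ell$ is uniformly continuous'' should accordingly be read as ``every \emph{finite} interval,'' which is also how Feller states it and is all the paper ever uses (the compact set $\Delta=[a,b]$). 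You flag the boundedness issue yourself in your closing remark, so this is a matter of stating the hypothesis explicitly rather than a gap in your reasoning.
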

\begin{proof} Please see Chaubey et al ((\cite{ChauLaiSen:12}), p. 975) and Feller ((\cite{Feller:70}), p. 219).\vspace*{0.25cm}
\end{proof}

\begin{lemma}\label{lema3} (Shi and Song (\cite{ShiSong:15}), p. 3491, 3501)\ \ Under assumption (H7) we have, for all $x \geqslant 0$,  \vspace*{-0.1cm}
 \begin{enumerate}
   \item[a)] $E\!\left(K_{_{\alpha(n,x),\beta(n)}}\!(X_1)\right) = f(x) + \frac{\,2\,f'(x) + x\, f''(x)\,}{2}\,h_n + o\left(h_n\right)$;
   \item[b)] $E\!\left(R(X_1)\,K_{_{\alpha(n,x),\beta(n)}}\!(X_1)\right)-R(x) E(K_{_{\alpha(n,x),\beta(n)}}\!(X_1) ) = $ \vspace*{0.25cm}

    \hfill $=\left[R'(x)\,f(x) + \frac{\,x\,}{2}\,R''(x)\,f(x) + x\,R'(x)\, f'(x)\,\right]\,h_n + o\left(h_n\right).$\endproof
 \end{enumerate} \vspace*{0.25cm}
\end{lemma}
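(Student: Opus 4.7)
The plan is to use Lemma \ref{lema1}\,a) with $p=1$ (taking the density of $X_1$ as the weight) to rewrite both expectations in terms of the gamma-distributed random variable $G_1 \equiv G_{(1,n,x)}$ with shape $x/h_n + 1$ and scale $h_n$. This gives $E(K_{\alpha(n,x),\beta(n)}(X_1)) = E(f(G_1))$ and $E(R(X_1)\,K_{\alpha(n,x),\beta(n)}(X_1)) = E(R(G_1)\,f(G_1))$, turning the problem into one of asymptotic expansions of smooth functionals at $G_1$. A direct computation of gamma moments yields $E(G_1) = x + h_n$ and $V(G_1) = xh_n + h_n^2$, hence $E(G_1 - x) = h_n$, $E((G_1-x)^2) = xh_n + 2h_n^2$, and $E((G_1-x)^4) = O(h_n^2)$, so that $G_1$ concentrates sharply around $x$ at the scale $\sqrt{h_n}$.

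For part a), I would Taylor-expand $f$ around $x$ to second order using (H7), which provides a bounded and continuous $f''$. Writing the remainder in integral form,
$$f(G_1) = f(x) + f'(x)(G_1 - x) + (G_1 - x)^2 \int_0^1 (1-s)\,f''(x + s(G_1-x))\,ds,$$
taking expectations and isolating the limiting value $f''(x)/2$ of the integral yields
$$E(f(G_1)) = f(x) + f'(x)\,h_n + \tfrac{1}{2}\,f''(x)\,E((G_1-x)^2) + o(h_n).$$
Inserting the formula for $E((G_1-x)^2)$ then delivers the announced expansion with coefficient $(2f'(x) + x f''(x))/2$. The $o(h_n)$ error is obtained by splitting the integral according to whether $|G_1 - x| < \delta$ or not: on the first region the continuity of $f''$ at $x$ gives a small factor $\varepsilon/2$ multiplying $E((G_1-x)^2) = O(h_n)$, while on the second region Chebyshev together with the $O(h_n^2)$ bound for $E((G_1-x)^4)$ produces a $o(h_n)$ contribution after using the boundedness of $f''$.

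For part b), I would observe that Lemma \ref{lema1}\,a) gives
$$E\!\left(R(X_1)\,K_{\alpha(n,x),\beta(n)}(X_1)\right) - R(x)\,E\!\left(K_{\alpha(n,x),\beta(n)}(X_1)\right) = E\!\left[(R(G_1) - R(x))\,f(G_1)\right].$$
Second-order Taylor expansion of $R(G_1)-R(x)$ and first-order expansion of $f(G_1)$, both legitimate under (H7), followed by multiplication and collection of terms of order $h_n$ after taking expectation, yields
$$E\!\left[(R(G_1)-R(x))\,f(G_1)\right] = R'(x)f(x)\,h_n + \left[R'(x)f'(x) + \tfrac{1}{2}R''(x)f(x)\right] E((G_1-x)^2) + o(h_n).$$
Substituting $E((G_1-x)^2) = x h_n + o(h_n)$ then gives the coefficient $R'(x)f(x) + xR'(x)f'(x) + (x/2)R''(x)f(x)$.

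The main obstacle, common to both parts, is the uniform control of the Taylor remainders in spite of the fact that $G_1$ has unbounded support: one must prevent the tails of $G_1$ from spoiling the expansion. This is precisely what the combination of (H7) (bounded $f''$ and $R''$) and the concentration estimates $E((G_1-x)^2) = O(h_n)$, $E((G_1-x)^4) = O(h_n^2)$ provides, through the splitting argument described above. Once this point is settled, everything reduces to bookkeeping of moments of the gamma distribution.
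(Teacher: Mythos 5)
Your proof is correct, but note that the paper itself offers no proof of this lemma: it is stated with an attribution to Shi and Song \cite{ShiSong:15} and closed immediately. Your argument --- reducing both expectations via Lemma \ref{lema1}\,a) with $p=1$ to $E(f(G_{(1,n,x)}))$ and $E\bigl((R(G_{(1,n,x)})-R(x))f(G_{(1,n,x)})\bigr)$, computing $E(G_1-x)=h_n$ and $E((G_1-x)^2)=xh_n+2h_n^2$, and Taylor-expanding under (H7) with an $\varepsilon$--$\delta$ splitting of the remainder --- is precisely the standard derivation used in the cited reference (going back to Chen \cite{Chen:00}), so you have in effect reconstructed the proof the authors delegate. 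The only point worth making explicit in part b) is that the cross terms (e.g. $R'(x)(G_1-x)$ times the second-order remainder of $f$, and the third-order remainder of the product) require $E(|G_1-x|^3)=o(h_n)$; this follows from Cauchy--Schwarz applied to the second and fourth moments you already computed, so no new ingredient is needed.
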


\begin{lemma}\label{Klipschiz} The gamma kernel has the following property
\begin{center}
$\exists\,C>0:\ \ \forall\,y\in\R^+,\ \ \forall\,x,u \in \Delta,\
  \left|K_{_{\alpha(n,x),\beta(n)}}\!(y)\! -\! K_{_{\alpha(n,u),\beta(n)}}\!(y)\right|\! \leqslant\! \frac{\,C\,\left|x-u\right|\,}{\sqrt{h_n^3\,}},$\vspace*{-0.3cm}
\end{center} for sufficiently large $n$.
\end{lemma}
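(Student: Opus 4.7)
My plan is to apply the mean value theorem in the variable $x$ to the map $x \mapsto K_{\alpha(n,x),\beta(n)}(y)$ on $\Delta$, and then obtain a pointwise bound on the corresponding partial derivative that is uniform in $y\geq 0$ and $x \in \Delta = [a,b]$ with $a>0$.

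First, I would compute the derivative explicitly. Writing
$$K_{\alpha(n,x),\beta(n)}(y) = \frac{(y/h_n)^{x/h_n}}{h_n\,\Gamma(x/h_n+1)}\,e^{-y/h_n},\qquad y>0,$$
logarithmic differentiation yields
$$\frac{\partial}{\partial x}K_{\alpha(n,x),\beta(n)}(y) = \frac{K_{\alpha(n,x),\beta(n)}(y)}{h_n}\bigl[\log(y/h_n) - \psi(x/h_n+1)\bigr],$$
where $\psi = \Gamma'/\Gamma$ is the digamma function. The MVT applied for fixed $y$ then gives, for every $x,u\in\Delta$,
$$\bigl|K_{\alpha(n,x),\beta(n)}(y) - K_{\alpha(n,u),\beta(n)}(y)\bigr|\leq |x-u|\,\sup_{\xi\in\Delta}\Bigl|\frac{\partial}{\partial x}K_{\alpha(n,\xi),\beta(n)}(y)\Bigr|,$$
so it suffices to show that the supremum over $\xi\in\Delta$ and $y\geq 0$ is of order $h_n^{-3/2}$.

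The crux is therefore to bound the product $K_{\alpha(n,\xi),\beta(n)}(y)\cdot\bigl|\log(y/h_n)-\psi(\xi/h_n+1)\bigr|$ uniformly by $C/\sqrt{h_n}$. The estimate (\ref{majK}) applied with $\xi\geq a$ gives $K_{\alpha(n,\xi),\beta(n)}(y)\leq C/\sqrt{h_n}$ uniformly. For the logarithmic factor, the standard expansion $\psi(\alpha) = \log\alpha + O(1/\alpha)$, applied with $\alpha=\xi/h_n+1\to +\infty$, yields $\psi(\xi/h_n+1)=\log(\xi/h_n)+O(h_n)$, and consequently
$$\log(y/h_n)-\psi(\xi/h_n+1) = \log(y/\xi) + O(h_n).$$
I would split $\{y\geq 0\}$ into a central region $\{e^{-M}\leq y/\xi\leq e^M\}$, for a large fixed $M$, where $|\log(y/\xi)|\leq M$ and the product is immediately bounded by $CM/\sqrt{h_n}$; and a tail region where $|\log(y/\xi)|>M$, where a Stirling-type estimate of the normalizing constant $\Gamma(\xi/h_n+1)\,h_n^{\xi/h_n+1}$ shows that $K_{\alpha(n,\xi),\beta(n)}(y)$ is exponentially small in $1/h_n$, so that the at most logarithmic growth of $|\log(y/h_n)|$ is absorbed. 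Combining the two regions gives the desired uniform bound $C/\sqrt{h_n}$ on the product, hence $|\partial K/\partial x|\leq C/(h_n\sqrt{h_n}) = C/\sqrt{h_n^3}$, which with the MVT completes the proof.

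The main obstacle is the tail estimate: one has to confirm that the Gaussian-type concentration of the gamma density around its mode $\xi$ (of spread $\sqrt{\xi h_n}$) genuinely dominates the unbounded logarithmic factor, most delicately as $y\to 0^+$ where $\log(y/h_n)\to -\infty$. This reduces to the (routine but slightly technical) observation that, for $\xi/h_n$ large, $y^{\xi/h_n}$ tends to zero much faster than $|\log y|$ tends to infinity, so that $K_{\alpha(n,\xi),\beta(n)}(y)\cdot|\log(y/h_n)|$ remains controlled near the origin by the same Stirling-based computation as in Lemma \ref{lema1}.
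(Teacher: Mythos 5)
Your proposal is correct and follows essentially the same route as the paper's proof: a mean value expansion in $x$, the digamma asymptotics $\psi(z+1)=\log z+O(1/z)$, and Stirling's formula to show that the exponential concentration of the gamma kernel around its mode absorbs the unbounded factor $\left|\log(y/\tilde{x})\right|$ away from the mode. The tail estimate you flag as the remaining technical point is precisely what the paper carries out via its explicit bound $\left|\log(y/\tilde{x})\right|\exp\left\{\frac{\tilde{x}}{h_n}\left(1-\frac{y}{\tilde{x}}+\log\frac{y}{\tilde{x}}\right)\right\}<e^{s_n(y)}$ with $s_n$ bounded, so no new idea is missing.
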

\begin{proof}
  Consider\ \ $n \in \N$,\ \ $y>0$\ \ and\ \ $x,u \in \Delta$\ \ arbitrarily fixed, with $x>u$.

 \noindent Using a similar argument as Shi and Song ((\cite{ShiSong:15}), p. 2506), we make a Taylor expansion of $K_{_{\alpha(n,x),\beta(n)}}\!(y)$ at $x=u$ up to the first order:

\vspace*{0.2cm}

 \noindent $K_{_{\alpha(n,x),\beta(n)}}\!(y)-K_{_{\alpha(n,u),\beta(n)}}\!(y) = \frac{x-u}{\,h_n^2\,\Gamma\left(\frac{\tilde{x}}{h_n}+1\right)\,}\left(\frac{y}{h_n}\right)^{\frac{\tilde{x}}{h_n}}\! e^{-\frac{y}{\,h_n\,}}\!\left[ \log\!\left(\!\frac{y}{\,h_n\,}\!\right)\!-\!\Psi\!\left(\frac{\tilde{x}}{h_n}+1\right) \right]\!,$

\vspace*{0.2cm}

\noindent where $\Psi$ is the digamma function and $\tilde{x} \in ]u,x[$.

\noindent Now, by Stirling's formula, $\Psi$ properties and some algebraic manipulations, the second member of the previous equality takes the form:
\noindent \begin{center}
 $\frac{x-u}{\sqrt{2\,\pi\, \tilde{x}\,h_n^3\,}}\exp\left\{\frac{\tilde{x}}{h_n} \!\left( 1-\frac{y}{\,\tilde{x}\,}+ \log \left(\frac{y}{\,\tilde{x}\,} \right) \right)\right\}\!\left[ \log\!\left(\!\frac{y}{\,\tilde{x}\,}\!\right)\!+\! \log\!\left(\!\frac{\tilde{x}}{\,h_n\,}\!\right)\!-\!\Psi\!\left(\!\frac{\tilde{x}}{\,h_n\,}\!\right)\!-\!\frac{\,h_n\,}{\tilde{x}} \right]\,(1\!+o(1)).$\vspace*{-0.3cm}
\end{center}

\vspace*{0.2cm}

\noindent Therefore, $\left|K_{_{\alpha(n,x),\beta(n)}}\!(y)-K_{_{\alpha(n,u),\beta(n)}}\!(y)\right|$ is bounded by

\noindent \begin{center}
 $\frac{|x-u|}{\sqrt{2\,\pi\, \tilde{x}\,h_n^3\,}}\,\exp\left\{\frac{\tilde{x}}{h_n} \!\left( 1-\frac{y}{\,\tilde{x}\,}+ \log \left(\frac{y}{\,\tilde{x}\,} \right) \right)\right\}\!\Big[\, \left|\log\!\left(\!\frac{y}{\,\tilde{x}\,}\!\right)\right|\!+\!\,\frac{\,2 \,h_n\,}{\tilde{x}} \Big]\,(1\!+o(1)).$
 \end{center}

\noindent Taking into account that \ $1-\frac{y}{\,\tilde{x}\,}+ \log \left(\frac{y}{\,\tilde{x}\,} \right) \leqslant 0$\ \ and\ \ $\tilde{x}>a$, we can see that
$\frac{\,2 \,h_n\,}{\tilde{x}}\exp\left\{\frac{\tilde{x}}{h_n} \!\left( 1-\frac{y}{\,\tilde{x}\,}+ \log \left(\frac{y}{\,\tilde{x}\,} \right) \right)\right\}< \frac{\,2 \,h_n\,}{a}$.

\vspace*{0.2cm}

\noindent On the other hand, for sufficiently large $n$,
\noindent \begin{center}
  $\left|\log\!\left(\!\frac{y}{\,\tilde{x}\,}\!\right)\right|\exp\left\{\frac{\tilde{x}}{h_n} \!\left( 1-\frac{y}{\,\tilde{x}\,}+ \log \left(\frac{y}{\,\tilde{x}\,} \right) \right)\right\}\! < e^{s_n(y)},
  $\end{center} \vspace*{-0.25cm}
 where \vspace*{-0.25cm}
\noindent \begin{center}
   $s_n (y)= \left\{\begin{array}{ll}
                   \left(\frac{b}{\,h_n\,} + 1\right) \log\left( 1+ \frac{\,h_n\,}{a}\right) -1,&\ \ \ y \geqslant 1 \vspace*{0.25cm}\\
                   \left(\frac{a}{\,h_n\,} - 1\right) \log\left( 1- \frac{\,h_n\,}{b}\right) +1, &\ \ \ 0 < y < 1.
                 \end{array}
   \right.$
 \end{center}
The convergence of the sequence $\left(h_n\right)_{n \in \N}$ to zero leads to the desired result.
\end{proof}

For the next lemmas recall the notations introduced in subsection \ref{DemTeoremas}, namely, (\ref{Ztn+-}), (\ref{phi+-}), (\ref{N2n(x)}) and (\ref{(3)}). \vspace*{0.25cm}

\begin{lemma}\label{convAn+}
If\ \ $E\!\left( \left|\Phi(Y_1)\right|^{\tau+1}\right) < + \infty$, for some $\tau > 0$, and the sequen\-ce  $(h_n)_{_{n\in\N}}$ satisfies \vspace*{-0.5cm}
\begin{center}
$n\,\sqrt{h_n\,}\ {\big{\uparrow}}+\infty,\ \ \ \ \ \exists\, \theta \in \left]0,\frac{\tau}{\,\tau+1\,}\right[\,:\ \ \ \displaystyle \underset{n \rightarrow + \infty}{\lim}\ \frac{\,n^{\theta}\,h_n\,}{\log n}\ =\ +\infty,$
\end{center}
then $$\textstyle \underset{x\in \Delta}{\sup}\left|\overset{n}{\underset{t=1}{\sum }}Z_{t,n}^+(x)\right|\ \overset{a.s.}{\overrightarrow {\underset{n \rightarrow + \infty}}}\ 0.$$
\end{lemma}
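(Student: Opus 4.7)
The strategy is to dominate $\underset{x\in\Delta}{\sup}\bigl|\overset{n}{\underset{t=1}{\sum}}Z_{t,n}^+(x)\bigr|$ by a scalar process not depending on $x$, and then to control this process almost surely via the ergodic theorem applied to moments of $\Phi(Y_t)$. The martingale/Azuma route used for $A_n^-$ is unavailable here because the truncated summands $\Phi^+(Y_t)$ are not uniformly bounded; what is available instead is their small $L^1$-mass.

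First, using (\ref{majK}) together with $x\geq a>0$ on $\Delta$ one has the uniform estimate $\sup_{x\in\Delta,\,y\geq 0}K_{\alpha(n,x),\beta(n)}(y)\leq C/\sqrt{h_n\,}$. Plugging this in the definition (\ref{Ztn+-}) of $Z_{t,n}^+$ and pushing the supremum inside the sum yields
\[
\underset{x\in\Delta}{\sup}\Bigl|\overset{n}{\underset{t=1}{\sum}}Z_{t,n}^+(x)\Bigr|\leq\frac{C}{\,n\sqrt{h_n\,}\,}\overset{n}{\underset{t=1}{\sum}}(W_t+V_t),
\]
with $W_t=|\Phi(Y_t)|\,\mathbb{I}_{\{|\Phi(Y_t)|\geq M_t\}}$ and $V_t=E(W_t/\mathcal{F}_{t-1})$. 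Hence the problem reduces to showing that $\frac{1}{n\sqrt{h_n\,}}\overset{n}{\underset{t=1}{\sum}}(W_t+V_t)\to 0$ almost surely.

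Second, the deterministic inequality $|\Phi(Y_t)|\leq|\Phi(Y_t)|^{\tau+1}M_t^{-\tau}$ on $\{|\Phi(Y_t)|\geq M_t\}$ gives, with $k=(1-\theta)/2$,
\[
W_t\leq\frac{|\Phi(Y_t)|^{\tau+1}}{t^{k\tau}},\qquad V_t\leq\frac{U_t}{t^{k\tau}},\qquad U_t:=E\bigl(|\Phi(Y_t)|^{\tau+1}/\mathcal{F}_{t-1}\bigr).
\]
The sequences $(|\Phi(Y_t)|^{\tau+1})$ and $(U_t)$ are stationary and ergodic, with common mean $E(|\Phi(Y_1)|^{\tau+1})<+\infty$, so Birkhoff's theorem yields $\overset{n}{\underset{t=1}{\sum}}|\Phi(Y_t)|^{\tau+1}=O(n)$ and $\overset{n}{\underset{t=1}{\sum}}U_t=O(n)$ almost surely. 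Setting $a_t=t^{-k\tau}$ (decreasing) and $S_n=\overset{n}{\underset{t=1}{\sum}}|\Phi(Y_t)|^{\tau+1}$, Abel summation,
\[
\overset{n}{\underset{t=1}{\sum}}a_t|\Phi(Y_t)|^{\tau+1}=a_nS_n+\overset{n-1}{\underset{t=1}{\sum}}(a_t-a_{t+1})S_t,
\]
then delivers $\overset{n}{\underset{t=1}{\sum}}W_t=O(n^{1-k\tau})$ almost surely when $k\tau<1$ (with the usual $O(\log n)$ correction when $k\tau=1$ and $O(1)$ when $k\tau>1$), and analogously for $\overset{n}{\underset{t=1}{\sum}}V_t$. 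Consequently $\frac{1}{n\sqrt{h_n\,}}\overset{n}{\underset{t=1}{\sum}}(W_t+V_t)=O\bigl(n^{-k\tau}h_n^{-1/2}\bigr)$ almost surely.

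It only remains to verify that $n^{k\tau}\sqrt{h_n\,}\to+\infty$. The bandwidth hypothesis $n^\theta h_n/\log n\to+\infty$ implies $\sqrt{h_n\,}\geq n^{-\theta/2}\sqrt{\log n\,}$ eventually, so $n^{k\tau}\sqrt{h_n\,}\geq n^{k\tau-\theta/2}\sqrt{\log n\,}$. A direct calculation shows that the assumption $\theta<\tau/(\tau+1)$ is equivalent to $(1-\theta)\tau>\theta$, that is, to $k\tau>\theta/2$, so the exponent is strictly positive and the conclusion follows. The main technical obstacle is passing from the elementary bound $E(\overset{n}{\underset{t=1}{\sum}}W_t)=O(n^{1-k\tau})$ to the corresponding almost sure bound; the Abel summation above transfers the almost sure growth rate of $\overset{n}{\underset{t=1}{\sum}}|\Phi(Y_t)|^{\tau+1}$, supplied for free by the ergodic theorem, to the weighted sum, and this is precisely what makes the hypothesis $\theta<\tau/(\tau+1)$ sharp: a naive Borel--Cantelli applied to $\{|\Phi(Y_t)|\geq M_t\}$ would require the stronger condition $\theta<(\tau-1)/(\tau+1)$.
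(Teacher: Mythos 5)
Your proof is correct, and after the common first step (the uniform kernel bound reducing everything to $\frac{1}{n\sqrt{h_n}}\sum_{t}(W_t+V_t)\to 0$ a.s.) it diverges genuinely from the paper's argument. The paper invokes Kronecker's lemma (which is where the monotonicity $n\sqrt{h_n}\uparrow+\infty$ is used) to reduce the problem to the a.s.\ convergence of the series $\sum_t \frac{1}{t\sqrt{h_t}}\bigl(|\Phi^+(Y_t)|+E(|\Phi^+(Y_t)|/\mathcal{F}_{t-1})\bigr)$, establishes that convergence via Van Ryzin's semimartingale lemma, and controls the resulting expectations $E(|\Phi^+(Y_n)|)$ by a H\"older--Markov argument yielding the factor $n^{-\tau k}$. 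You instead dominate the summands pathwise, $W_t\leq t^{-k\tau}|\Phi(Y_t)|^{\tau+1}$ and $V_t\leq t^{-k\tau}U_t$, apply Birkhoff's theorem to the stationary ergodic sequences $(|\Phi(Y_t)|^{\tau+1})$ and $(U_t)$ (the same appeal to the ergodic theorem for conditional moments that the paper itself makes when treating $A^-_{1,n}$), and transfer the linear growth of the partial sums to the weighted sums by Abel summation. The exponent arithmetic $k\tau>\theta/2\iff\theta<\tau/(\tau+1)$ is identical in both arguments, so the two proofs exploit the hypotheses in exactly the same way; what yours buys is the elimination of Van Ryzin's lemma in favour of entirely elementary tools, and it does not actually use the monotonicity of $n\sqrt{h_n}$, only the lower bound on $h_n$ implied by $n^{\theta}h_n/\log n\to+\infty$. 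Your closing remark about the sharpness relative to a naive Borel--Cantelli truncation (which would need $\theta<(\tau-1)/(\tau+1)$) is also accurate.
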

\begin{proof}Noting that, for $x>0$, $K_{_{\alpha(n,x),\beta(n)}}\!(X_t)$ is bounded by $\frac{C}{\,\sqrt{x\,h_n\,}\,}$\ \ (c.f. (\ref{majK})), we may write
$$\textstyle \underset{x\in \Delta}{\sup}\left|\frac{1}{\,n\,} \overset{n}{\underset{t=1}{\sum }}Z_{t,n}^+(x)\,\right|\ \leqslant \frac{C}{\,n\,\sqrt{a\,h_n\,}\,}\  \overset{n}{\underset{t=1}{\sum }}\left(\left|\Phi^+(Y_t) \right| + E\left(\left|\Phi^+(Y_t) \right|/ \mathcal{F}_{t-1}\right) \right).$$
As \ \ $n\,\sqrt{h_n\,}\ {\big{\uparrow}}+\infty$,\, it suffices to prove, by Kronecker's lemma, that
\begin{equation}\label{VanRyzin}
\textstyle \overset{+\infty}{\underset{t=1}{\sum }}\,\frac{1}{\,t\,\sqrt{\,h_t\,}\,}\left(\left|\Phi^+(Y_t)\right| + E\left(\left|\Phi^+(Y_t)\right|/ \mathcal{F}_{t-1}\right) \right)< +\infty,\ \ a.s..
\end{equation}

To this aim, define $$\textstyle W_n = \overset{n}{\underset{t=1}{\sum }}\,\frac{1}{\,t\,\sqrt{\,h_t\,}\,}\left(\left|\Phi^+(Y_t)\right| + E\left(\left|\Phi^+(Y_t)\right|/ \mathcal{F}_{t-1}\right) \right),\ \ n \in \N.$$
The sequence $\left(W_n\right)_{n \in \N}$ satisfies the conditions of Van Ryzin's lemma (c.f. Van Ryzin (\cite{VanRyzin:69}), p. 1765), in particular,
$\displaystyle E\left(W_{n+1}/\mathcal{F}_{n}\right)\ =\ W_n + W'_n,$\ \ with
$$\textstyle W'_n = \frac{2}{\,(n+1)\,\sqrt{\,h_{n+1}\,}\,}\ E\!\left(\left|\Phi^+(Y_{n+1})\right|/ \mathcal{F}_{n}\right).$$

\vspace*{-0.3cm}

Consequently, to prove (\ref{VanRyzin}), it is enough to show that\ \ $\overset{+\infty}{\underset{n=1}{\sum }}E\left(W'_n\right) < +\infty$.

\noindent Applying H\"{o}lder and Markov inequalities, with $p= \tau+1$ and $q = \frac{\,\tau + 1\,}{\tau}$, we get
$$\begin{array}{rl}
E\!\left(\left|\Phi^+(Y_{n})\right|\right)\! &  \leqslant \left(E\!\left(\left|\Phi(Y_{n})\,\right|^p\right)\right)^{\frac{1}{\,p\,}}\left(E\left(|\Phi(Y_{n})|^p\right)\right)^{\frac{1}{\,q\,}}  M_n^{-\frac{p}{\,q\,}}\vspace*{0.3cm} \\
    & =  E\left(|\Phi(Y_{n})|^{\tau+1}\right)\,n^{-\tau k}
  \end{array}$$
which implies that $$\textstyle \overset{+\infty}{\underset{n=1}{\sum }} E\left(W'_n\right)\leqslant C\ \overset{+\infty}{\underset{n=1}{\sum }} \frac{1}{\,n^{1+\tau k}\,\sqrt{\,h_{n}\,}\,} .$$ The fact that\ \ $\underset{n \rightarrow + \infty}{\lim}\ \frac{\ n^{\theta}\,h_n\ }{\log n}\ =\ +\infty$ and the definition of $k$ assure the announced result. \vspace*{0.25cm}
\end{proof}

\begin{lemma}\label{ConvPDn(x)} Assume that (H1) and (H2) are fulfilled. If \begin{itemize}
                                  \item[a)] $\underset{n\rightarrow+\infty}{\lim} n\,\sqrt{ h_n\,}= + \infty ,$ then\ \ $D_n(x)\ \overset{P}{\overrightarrow{\underset{n\rightarrow + \infty}}}\ f(x),\ \ \ x>0;$
                                  \item[b)]  $\underset{n\rightarrow+\infty}{\lim} n\, h_n= + \infty ,$ then\ \ $D_n(0)\ \overset{P}{\overrightarrow{\underset{n\rightarrow + \infty}}}\ f(0).$
                                \end{itemize}
\end{lemma}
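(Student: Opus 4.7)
My plan is to use the standard bias--variance decomposition adapted to the ergodic martingale framework of the paper, writing
$$D_n(x)-f(x)=D_{1,n}(x)+D_{2,n}(x),\qquad D_{1,n}(x)=D_n(x)-\overline{D}_n(x),\ \ D_{2,n}(x)=\overline{D}_n(x)-f(x),$$
and proving that $D_{2,n}(x)\to 0$ almost surely (the bias) while $D_{1,n}(x)\to 0$ in $L^2$, hence in probability (the martingale fluctuation).

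For the bias term, I would first use (H1) to interchange the conditional expectation with the integral, obtaining
$$D_{2,n}(x)=\int_0^{+\infty}\!\!K_{\alpha(n,x),\beta(n)}(y)\!\left(\frac{1}{n}\sum_{t=1}^n f^{\mathcal{F}_{t-1}}(y)-f(y)\right)\!dy + \left(\int_0^{+\infty}\!\!K_{\alpha(n,x),\beta(n)}(y)\,f(y)\,dy - f(x)\right).$$
Since $K_{\alpha(n,x),\beta(n)}$ is a probability density, the first summand is bounded in absolute value by $\left\|\frac{1}{n}\sum_{t=1}^n f^{\mathcal{F}_{t-1}}-f\right\|_0$, which tends to zero almost surely by (H2). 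The second summand equals $B(1,n,x)\,E(f(G_{(1,n,x)}))-f(x)=E(f(G_{(1,n,x)}))-f(x)$ by Lemma \ref{lema1} a) (noting that $B(1,n,x)=1$), and this converges to zero by Lemma \ref{lema2Hille} applied to the continuous bounded function $f$. The argument is identical for $x=0$ and $x>0$.

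For the martingale part, the variables
$$\xi_{t,n}=\frac{1}{n}\Bigl(K_{\alpha(n,x),\beta(n)}(X_t)-E\bigl(K_{\alpha(n,x),\beta(n)}(X_t)/\mathcal{F}_{t-1}\bigr)\Bigr),\quad t=1,\ldots,n,$$
form a martingale difference sequence with respect to $(\mathcal{F}_t)$. By orthogonality, the conditional variance inequality and strict stationarity,
$$E\bigl(D_{1,n}(x)^2\bigr)=\sum_{t=1}^n E(\xi_{t,n}^2)\leqslant \frac{1}{n^2}\sum_{t=1}^n E\!\left(K^2_{\alpha(n,x),\beta(n)}(X_t)\right)=\frac{1}{n}\,B(2,n,x)\,E\bigl(f(G_{(2,n,x)})\bigr),$$
where the last identity comes from Lemma \ref{lema1} a) with $\varphi=1$, $T=X_1$, $g=f$ and $p=2$. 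For $x>0$, Lemma \ref{lema1} b) gives $\sqrt{h_n}\,B(2,n,x)\to \frac{1}{2\sqrt{\pi x}}$, so $E(D_{1,n}(x)^2)=O\bigl(1/(n\sqrt{h_n})\bigr)$, which vanishes under hypothesis a); for $x=0$, the remark after Lemma \ref{lema1} yields $h_n\,B(2,n,0)=1/2$, so $E(D_{1,n}(0)^2)=O(1/(nh_n))$, which vanishes under hypothesis b). Chebyshev's inequality then delivers convergence in probability in both cases. The only real obstacle is bookkeeping: one must check that the conditional variance bound reproduces precisely the two bandwidth rates appearing in a) and b); beyond this, the argument follows mechanically from the auxiliary lemmas already in place.
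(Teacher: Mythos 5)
Your proof is correct and follows essentially the same route as the paper: the paper likewise reduces the claim to $\overline{D}_n(x)\overset{P}{\to}f(x)$ (which it attributes to (H1)--(H2) without writing out the split you make explicit) and then shows $E\bigl((D_n(x)-\overline{D}_n(x))^2\bigr)\leqslant \frac{2}{n}B(2,n,x)E\bigl(f(G_{(2,n,x)})\bigr)\to 0$ via Lemmas \ref{lema1} and \ref{lema2Hille}, exactly as in your martingale-variance computation.
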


\begin{proof} Assumptions (H1) and (H2) assure that $\textstyle\left(\overline{D}_{n}(x)\right)_{n\in \N}$ converges in pro\-ba\-bility to $f(x)$, for all $x \geq 0$. The result follows then if we prove that
$$\underset{n\rightarrow+\infty}{\lim} E\left((D_n(x)- \overline{D}_n(x))^2\right)=0,\,\,x \geq 0.$$
To this end, we write
$$\begin{array}{rl}
E\!\left((D_n(x)- \overline{D}_n(x))^2\right) \!\!\!\!& = \! \frac{1}{\,n^2\,}\!\overset{n}{\underset{t=1}{\sum }}E\left(\!\left(\!\,K_{_{\alpha(n,x),\beta(n)}}(X_t)\!-\!E\!\left(\,K_{_{\alpha(n,x),\beta(n)}}(X_t)/ \mathcal{F}_{t-1}\!\right)\!\right)^2\!\right)
\vspace*{0.25cm}\\
              & = \frac{1}{\,n^2\,}\overset{n}{\underset{t=1}{\sum }}\!\left(\!E\left(\!K^2_{_{\alpha(n,x),\beta(n)}}(X_t)-E^2\!\left(\!K_{_{\alpha(n,x),\beta(n)}}(X_t)/ \mathcal{F}_{t-1}\right)\right)\right) \vspace*{0.25cm}\\
              &  \leqslant  \frac{2}{\,n^2\,}\overset{n}{\underset{t=1}{\sum }}E\left(\!K^2_{_{\alpha(n,x),\beta(n)}}(X_t)\right)\! =  \frac{2}{\,n\,}{\displaystyle \int_{0}^{+\infty}}\!\! K^2_{_{\alpha(n,x),\beta(n)}}(y)f(y)dy \vspace*{0.35cm}  \\
              & = \frac{2}{\,n\,}\,B(2,n,x)\, E\!\left(f\!\left(G_{(2,n,x)}\right)\right).
                \end{array}$$
Hence, using Lemmas \ref{lema1} and \ref{lema2Hille}, we get

\noindent a) if $x>0$,\ \ $E\left((D_n(x)- \overline{D}_n(x))^2\right) \leqslant \frac{2}{\,n\sqrt{h_n}}\left(\frac{1}{\,\sqrt{\pi x\,}}+o(1)\right)\left(f(x)+o(1)\right),$\vspace*{0.2cm}

\noindent b) if $x=0$,\ \ $E\left((D_n(0)- \overline{D}_n(0))^2\right) \leqslant \frac{1}{\,n h_n\,}\left(f(0)+o(1)\right),$

\noindent achieving the proof of the lemma.

\end{proof}

\begin{lemma}\label{ConvN2n(x)} Suppose (H3) and (H8) are verified. If \begin{itemize}
                                  \item[a)] $\underset{n\rightarrow+\infty}{\lim} n\,\sqrt{ h_n\,}= + \infty$\ \ and\ \ $\underset{n\rightarrow+\infty}{\lim} n\,\sqrt{h_n^5\,}=0 ,$ then \vspace*{-0.2cm} $$\textstyle\sqrt{n\,\sqrt{h_n\,}\,}\ N_{2,n}(x)\ \overset{P}{\overrightarrow{\underset{n\rightarrow + \infty}}}\ 0,\ \ \ x>0;$$
                                  \item[b)]  $\underset{n\rightarrow+\infty}{\lim} n\, h_n= + \infty$\ \ and\ \ $\underset{n\rightarrow+\infty}{\lim} n\,h_n^3=0 ,$ then\ \ $\sqrt{n\,h_n\,}\ N_{2,n}(0)\ \overset{P}{\overrightarrow{\underset{n\rightarrow + \infty}}}\ 0.$
                                \end{itemize}
\end{lemma}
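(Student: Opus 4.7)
The plan is to decompose $N_{2,n}(x)$ into a deterministic bias-type piece and a random piece, then control each via different tools: Lemma \ref{lema3} for the bias, and hypothesis (H8) combined with Cauchy--Schwarz for the random piece.

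First, using (H4) via observation (i) at the start of Section \ref{DemTeoremas}, one has $E(\Phi(Y_t) K_{\alpha(n,x),\beta(n)}(X_t) \mid \mathcal{F}_{t-1}) = E(R(X_t) K_{\alpha(n,x),\beta(n)}(X_t) \mid \mathcal{F}_{t-1})$, so
$$N_{2,n}(x) = \int_0^{+\infty} (R(y) - R(x))\, K_{\alpha(n,x),\beta(n)}(y)\, \Big[\tfrac{1}{n}\sum_{t=1}^n f^{\mathcal{F}_{t-1}}(y)\Big]\,dy.$$
Splitting $\tfrac{1}{n}\sum_t f^{\mathcal{F}_{t-1}}(y) = \big[\tfrac{1}{n}\sum_t f^{\mathcal{F}_{t-1}}(y) - f(y)\big] + f(y)$ yields $N_{2,n}(x) = N_{2,n}^{a}(x) + N_{2,n}^{b}(x)$.

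The bias term $N_{2,n}^{b}(x) = E(R(X_1)K_{\alpha(n,x),\beta(n)}(X_1)) - R(x)\,E(K_{\alpha(n,x),\beta(n)}(X_1))$ is deterministic and equals $O(h_n)$ by Lemma \ref{lema3} b) (which uses (H7)); the bandwidth hypotheses then suffice, since in case a) $\big(\sqrt{n\sqrt{h_n}}\cdot h_n\big)^2 = n\sqrt{h_n^5} \to 0$, and in case b) $\big(\sqrt{nh_n}\cdot h_n\big)^2 = nh_n^3 \to 0$.

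For the random term $N_{2,n}^{a}(x) = \tfrac{1}{n}\int(R(y)-R(x))K_{\alpha(n,x),\beta(n)}(y)\,G_n(y)\,dy$ with $G_n(y) = \sum_{t=1}^n [f^{\mathcal{F}_{t-1}}(y) - f(y)]$, I apply Cauchy--Schwarz by writing $K = \sqrt{K}\cdot\sqrt{K}$ to obtain
$$E\big((N_{2,n}^a(x))^2\big) \leq \tfrac{1}{n^2}\!\int\! (R(y)-R(x))^2\, K_{\alpha(n,x),\beta(n)}(y)\,dy \,\cdot \!\int\! K_{\alpha(n,x),\beta(n)}(y)\,E[G_n(y)^2]\,dy.$$
By (H8), $\sup_y E[G_n(y)^2] = O(n)$, so the above is bounded by $\tfrac{C}{n} E[(R(G_{(1,n,x)})-R(x))^2]$ (using Lemma \ref{lema1} a) with $p=1$, where $B(1,n,x)=1$). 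By (H3) and Lemma \ref{lema2Hille}, this expectation tends to $0$. Hence $n\sqrt{h_n}\,E((N_{2,n}^a(x))^2) = o(\sqrt{h_n}) \to 0$ in case a), and $nh_n\,E((N_{2,n}^a(0))^2) = o(h_n) \to 0$ in case b); Markov's inequality then gives convergence in probability.

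The subtle point is the Cauchy--Schwarz split: one must use the kernel as the reference measure on both factors, so that (H8)'s uniform $L^2$ bound can be integrated against $K$ (whose mass is $1$) without loss. The $O(n)$ growth from the partial sum of conditional densities then matches exactly the $1/n^2$ normalization, leaving only the vanishing kernel-concentration factor $E[(R(G_{(1,n,x)})-R(x))^2] \to 0$ to close the argument.
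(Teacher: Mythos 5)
Your proposal is correct and follows essentially the same route as the paper: the same splitting of $N_{2,n}(x)$ into the centred stochastic part (handled by Cauchy--Schwarz with the kernel as reference measure, Fubini, and the $O(n)$ bound from (H8)) and the deterministic bias part (handled by Lemma \ref{lema3} b) together with the bandwidth conditions $n\sqrt{h_n^5}\to 0$, resp.\ $nh_n^3\to 0$). The only cosmetic difference is that you keep the weight $(R(y)-R(x))^2$ inside the Cauchy--Schwarz bound and let it tend to zero via Lemma \ref{lema2Hille}, whereas the paper simply bounds $|R(y)-R(x)|$ by $2\sup|R|$ using (H3); both suffice.
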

\begin{proof}
Let us begin by decomposing $N_{2,n} (x) $: $$N_{2,n} (x) = \overline{N}_n(x)- R(x) \overline{D}_n(x) = N^a_{2,n}(x) + N^b_{2,n}(x),$$ with
\begin{equation}\label{Na2n(x)}
    N^a_{2,n}(x) =  \overline{N}_n(x)- R(x) \overline{D}_n(x) - \left(E(N_n(x)) - R(x) E(D_n(x))\right), \vspace*{-0.2cm}
  \end{equation}
\begin{equation}\label{Nb2n(x)}
  N^b_{2,n}(x) =  E(N_n(x)) - R(x) E(D_n(x)).
\end{equation}

\noindent For all $x\geqslant0$, $$\begin{array}{rl}
        N^a_{2,n}(x)\! & = \frac{1}{\,n\,} \bigg[\,\overset{n}{\underset{t=1}{\sum }}\,E\left((R(X_t) - R(x))\,K_{_{\alpha(n,x),\beta(n)}}\!(X_t)/\mathcal{F}_{t-1}\right) -\bigg. \vspace*{0.25cm}\\
         & \bigg.\ \ \ \ -\, n\, E\left((R(X_t) - R(x))\,K_{_{\alpha(n,x),\beta(n)}}\!(X_t)\right)\bigg] \vspace*{0.25cm}\\
         & =\frac{1}{\,n\,}{\displaystyle \int_{0}^{+\infty}}\!\! K_{_{\alpha(n,x),\beta(n)}}\!(y)\,(R(y)-R(x)) \left(\overset{n}{\underset{t=1}{\sum }}f^{\mathcal{F}_{t-1}}(y) - n f(y)\right)dy.
      \end{array}$$
      Setting  $H_n(y) = \overset{n}{\underset{t=1}{\sum }}f^{\mathcal{F}_{t-1}}(y) - n f(y) $, we are led to
      $$\textstyle \left|N^a_{2,n}(x)\right| \leqslant 2\, {\underset{u \in \R_0^+}\sup} |R(u)|\ \frac{1}{\,n\,}{\displaystyle \int_{0}^{+\infty}}\!\! K_{_{\alpha(n,x),\beta(n)}}\!(y)\,\left|H_n(y)\right|\,dy. $$
      Therefore, Cauchy-Schwarz inequality and Fubini-Tonelli theorem  yield $$\begin{array}{rl}
             E\left(\left|N^a_{2,n}(x)\right|^2\right)\! & \leqslant \frac{C}{\,n^2\,} {\displaystyle
                                                           \int_{0}^{+\infty}}\!\!K_{_{\alpha(n,x),\beta(n)}}\!(y)\,
                                                           E\left(H^2_n(y)\right)\,dy \vspace*{0.25cm}\\
              &  \leqslant \frac{C}{\,n^2\,}\ {\underset{y \in \R_0^+}\sup}\left\|H_n(y)\right\|^2_2 =  O\left( \frac{1}{\,n\,}\right)
           \end{array}
$$
since $\left\|H_n(y)\right\|^2_2 = O(n)$, by hypothesis (H8).

\noindent a) Consider $x>0$. Applying Markov's inequality, we obtain $$\textstyle \forall\, \varepsilon>0,\ \ P\left(\sqrt{n\,\sqrt{h_n\,}\,}\left|N^a_{2,n}(x)\right|>\varepsilon\right)\  = O\left(\sqrt{h_n\,}\right),$$
          and thus $\sqrt{n\,\sqrt{h_n\,}\,}\ N^a_{2,n}(x)\ \overset{P}{\overrightarrow{\underset{n\rightarrow + \infty}}}\ 0$.

\noindent On the other hand, from Lemma \ref{lema3} b), we have $$\textstyle\sqrt{n\,\sqrt{h_n\,}\,}\ N^b_{2,n}(x) = \sqrt{n\,\sqrt{h_n^5\,}\,}\,b(x) + \sqrt{n\,\sqrt{h_n\,}\,}\, o(h_n),$$
\noindent with $$b(x)= R'(x)\,f(x)+\frac{\,x\,}{2}\,R''(x)\,f(x) + x\, R'(x)\,f'(x),$$ which concludes the proof of a).

\noindent b) For $x=0$, a similar reasoning conduces to $$\textstyle\forall\, \varepsilon>0,\ \ P\left(\sqrt{n\,h_n\,}\left|N^a_{2,n}(0)\right|>\varepsilon\right)\  = O\left(h_n\right)$$ and
          $$\textstyle\sqrt{n\,h_n\,}\ N^b_{2,n}(0) = \sqrt{n\,h_n^3\,}\,R'(0)\,f(0) + \sqrt{n\,h_n\,}\, o(h_n),$$ completing the proof of the lemma.
\end{proof}\vspace*{0.25cm}

\begin{remark} \label{D2,n(x).Dens} \textup{In a similar way, the convergence of $\textstyle\left(\!\sqrt{n\,\sqrt{h_n\,}}D_{2,n}(x)\!\right)_{n\in \N}$ in probability to zero, needed in the proof of Theorem 3.4 (c.f. (\ref{(3)})), is based on the decomposition
$$D_{2,n} (x) = D^a_{2,n}(x) + D^b_{2,n}(x),$$
 with 
$$D^a_{2,n}(x) =  \overline{D}_n(x)-E(D_n(x)) \qquad \mbox{and} \qquad D^b_{2,n}(x) =  E(D_n(x)) - f(x).$$
In this case, we simply get
$$\textstyle D^a_{2,n}(x) = \frac{1}{\,n\,}{\displaystyle\int_{0}^{+\infty}}\!\! K_{_{\alpha(n,x),\beta(n)}}\!(y)\,\left(\overset{n}{\underset{t=1}{\sum }}H_n(y) \right)dy$$
which leads again to the equality  $\,\,E\left(\left|D^a_{2,n}(x)\right|^2\right)=  O\left( \frac{1}{\,n\,}\right),\,$ under (H8).}

\textup{As for the bias term, it suffices to apply Lemma \ref{lema3} a), giving
$$\textstyle\sqrt{n\,\sqrt{h_n\,}\,}\ D^b_{2,n}(x) = \sqrt{n\,\sqrt{h_n^5\,}\,}\,\left(f'(x)+ \frac{xf''(x)}{2}\right)+\sqrt{n\,\sqrt{h_n\,}\,}\, o(h_n).$$}
\end{remark}\vspace*{0.25cm}

\begin{lemma} \label{TLCcond2}Suppose that conditions (H5) and (H6)(ii) hold. If $\left(h_n\right)_{n \in \N}$ is such that\vspace*{-0.1cm}
\begin{itemize}
\item[a)] $\underset{n\rightarrow+\infty}{\lim} n\,\sqrt{ h_n\,}= + \infty$, then \vspace*{-0.3cm}
\begin{equation*}
\forall\, \varepsilon > 0,\ \ \ n\,E\!\left(U_{t,n}^{^2}(x)\,\mathbb{I}_{\left\{\left|U_{t,n}(x)\right|> \varepsilon \right\}}\right) = o(1),\ \ \ x>0.
\end{equation*}

\item[b)]  $\underset{n\rightarrow+\infty}{\lim} n\, h_n= + \infty$, then \vspace*{-0.3cm}\begin{equation*}
\forall\, \varepsilon > 0,\ \ \ n\,E\!\left(U_{t,n}^{^2}(0)\,\mathbb{I}_{\left\{\left|U_{t,n}(0)\right|> \varepsilon \right\}}\right) = o(1).
\end{equation*}
 \end{itemize}
\end{lemma}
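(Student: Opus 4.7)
The plan is to verify the Lindeberg condition by passing to a $(2+\zeta)$-th moment via a Hölder/Markov bound, then control that moment through Lemma \ref{lema1} and the boundedness of the conditional moment $W_{\zeta+2}$ granted by (H6)(ii).

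First I would observe that, by Markov's inequality applied to $|U_{t,n}(x)|^{\zeta}$ on the event $\{|U_{t,n}(x)|>\varepsilon\}$,
$$E\!\left(U_{t,n}^{^2}(x)\,\mathbb{I}_{\left\{\left|U_{t,n}(x)\right|> \varepsilon \right\}}\right)\ \leqslant\ \varepsilon^{-\zeta}\, E\!\left(\left|U_{t,n}(x)\right|^{2+\zeta}\right),$$
so it is enough to prove $n\,E(|U_{t,n}(x)|^{2+\zeta}) = o(1)$ (in fact the Hall--Heyde statement calls for the sum $\sum_{t=1}^n$, but the bound obtained below does not depend on $t$, so the two versions are equivalent). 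Next, since $U_{t,n}(x) = V_{t,n}(x)-E(V_{t,n}(x)/\mathcal{F}_{t-1})$, the $C_r$-inequality together with the conditional Jensen inequality yields $E(|U_{t,n}(x)|^{2+\zeta}) \leqslant 2^{1+\zeta}\,E(|V_{t,n}(x)|^{2+\zeta})$, reducing the task to bounding $E(|V_{t,n}(x)|^{2+\zeta})$.

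For the bound on the latter, I would use (H3) together with the elementary inequality $|\Phi(Y_t)-R(x)|^{2+\zeta}\leqslant 2^{1+\zeta}(|\Phi(Y_t)|^{2+\zeta}+\|R\|_\infty^{2+\zeta})$, iterate the conditional expectation with respect to $X_t$, and invoke (H6)(ii) to write
$$E\!\left(\left|\Phi(Y_t)-R(x)\right|^{2+\zeta} K^{2+\zeta}_{_{\alpha(n,x),\beta(n)}}\!(X_t)\right) \leqslant C\,E\!\left(\bigl(W_{\zeta+2}(X_t)+\|R\|_\infty^{2+\zeta}\bigr)K^{2+\zeta}_{_{\alpha(n,x),\beta(n)}}\!(X_t)\right).$$
Applying Lemma \ref{lema1}(a) with $p=2+\zeta$ and exploiting the boundedness of $W_{\zeta+2}$ and of $f$ (recall $f\in C_0(\R)$), this expectation is of order $B(2+\zeta,n,x)$. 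By Lemma \ref{lema1}(b) one has $B(2+\zeta,n,x)=O\!\left(h_n^{-(1+\zeta)/2}\right)$ for $x>0$, while the remark following Lemma \ref{lema1} gives $B(2+\zeta,n,0)=1/[(2+\zeta)h_n^{1+\zeta}]$.

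It only remains to substitute these estimates into the definition of $V_{t,n}(x)$ and multiply by~$n$. For $x>0$ one obtains
$$n\,E\!\left(\left|V_{t,n}(x)\right|^{2+\zeta}\right) = O\!\left(\frac{h_n^{(2+\zeta)/4}}{n^{\zeta/2}}\cdot h_n^{-(1+\zeta)/2}\right) = O\!\left(\bigl(n\sqrt{h_n\,}\bigr)^{-\zeta/2}\right),$$
which tends to zero under the hypothesis $n\sqrt{h_n\,}\to +\infty$ of part a). Analogously, for $x=0$, the factor $h_n^{(2+\zeta)/2}/n^{(2+\zeta)/2}$ combined with $B(2+\zeta,n,0)=O(h_n^{-(1+\zeta)})$ produces $O\!\left((nh_n)^{-\zeta/2}\right)$, which is $o(1)$ under condition b). Neither step looks genuinely hard: the only subtle point is avoiding a Markov-type assumption for the $(\zeta+2)$-th moment—handled by conditioning on $X_t$ alone and invoking the boundedness of $W_{\zeta+2}$ in (H6)(ii)—and the bookkeeping of exponents of $h_n$ in the asymptotic behavior of $B(p,n,x)$ at $x>0$ versus $x=0$.
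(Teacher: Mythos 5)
Your proof is correct and follows essentially the same route as the paper: both arguments reduce the Lindeberg condition to a bound on $n\,E\!\left(|V_{t,n}(x)|^{\zeta+2}\right)$, control that moment by conditioning on $X_t$, invoking the boundedness of $W_{\zeta+2}$ from (H6)(ii) and Lemma \ref{lema1} a), b), and arrive at the same rates $O\!\left((n\sqrt{h_n\,})^{-\zeta/2}\right)$ for $x>0$ and $O\!\left((nh_n)^{-\zeta/2}\right)$ for $x=0$. The only difference is cosmetic and sits at the front end: where you apply Markov's inequality to $|U_{t,n}(x)|^{\zeta}$ and then the $C_r$ and conditional Jensen inequalities to pass from $U_{t,n}$ to $V_{t,n}$, the paper first uses Corollary 9.5.2 of Chow and Teicher to reduce to the truncated second moment of $V_{t,n}$ and then applies H\"older and Markov with exponents $p=\frac{\zeta}{2}+1$, $q=\frac{2}{\zeta}+1$.
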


\begin{proof} Let $x\geqslant0$ and $\varepsilon>0$. By corollary 9.5.2 of Chow and Teicher ((\cite{ChowTeicher:88}), p. 131), we have
$$\textstyle n\,E\!\left(U_{t,n}^{^2}(x)\,\mathbb{I}_{\left\{\left|U_{t,n}(x)\right|> \varepsilon \right\}}\right) \leqslant 4\,n\,E\!\left(V_{t,n}^{^2}(x)\,\mathbb{I}_{\left\{\left|V_{t,n}(x)\right|> \frac{\,\varepsilon\,}{4} \right\}}\right).$$
Applying once again the inequalities of H\"{o}lder and Markov, with $p = \frac{\,\zeta\,}{2} + 1$  and $q = \frac{2}{\,\zeta\,}+ 1$, the right-hand side of the last inequality is bounded by 
$$\textstyle 4\, n\left\{E\!\left(\left|V_{t,n}(x)\right|^{^{2p}}\right)\right\}^{\frac{1}{\,p\,}}\left\{P\!\left(\left|V_{t,n}(x)\right|> \frac{\,\varepsilon\,}{4} \right)\right\}^{\frac{1}{\,q\,}}\leqslant 4 \, n\,E\!\left(\left|V_{t,n}(x)\right|^{^{2p}}\right)\,\left(\frac{\,\varepsilon\,}{4}\right)^{^{-{\frac{\,2p\,}{\,q\,}}}}.
$$

\noindent a) Consider $x>0$. Jensen's inequality allows us to write
$$\begin{array}{rl}
n \,E\!\left(\left|V_{t,n}(x)\right|^{^{2p}}\right)\!&\leqslant \frac{\,h_n^{^{\frac{\,p\,}{2}}}\,}{n^{^{p^{^{}}}}}\, 2^{^{p-1}}\, E\!\left(\left(|\Phi(Y_t)|^{^{2p}}+ |R(x)|^{^{2p}}\right)\,K^{^{2p}}_{_{\alpha(n,x),\beta(n)}}\!(X_t)\right) \vspace*{0.3cm}\\

               & =\ \frac{\,h_n^{^{\frac{\,p\,}{2}}}\,}{n^{^{p^{^{}}}}}\, 2^{^{p-1}}\,E\!\left(\left(E(|\Phi(Y_t)|^{^{2p}}/X_t)+ |R(x)|^{^{2p}}\right)\,K^{^{2p}}_{_{\alpha(n,x),\beta(n)}}\!(X_t)\right)\vspace*{0.3cm}\\
               & \leqslant \frac{\,h_n^{^{\frac{\,p\,}{2}}}\,}{n^{^{p^{^{}}}}}\, 2^{^{p-1}}\!\bigg(\,\underset{y\in\R_0^+}{\sup}W_{2p}(y) + |R(x)|^{^{2p}}\!\bigg) B(2p,n,x)\, E\!\left(f\!\left(G_{(2p,n,x)}\right)\right)\!.
           \end{array}$$
          \noindent Finally, hypothesis (H6)(ii), the fact that $\underset{n\rightarrow + \infty}{\lim} n \sqrt{h_n\,} = +\infty $ and the results \vspace*{-0.35cm}

          \begin{center}
          $h_n^{^{p-\frac{1}{\,2\,}}} B(2p,n,x)\ {\overrightarrow{\underset{n\rightarrow + \infty}}}\ \frac{1}{\,\sqrt{2p\,}\,\left(\sqrt{2\,\pi\,x\,}\right)^{^{2p-1}}\,}\ \ \text{and}\ \ E\!\left(f\!\left(G_{(2p,n,x)}\right)\right)\ {\overrightarrow{\underset{n\rightarrow + \infty}}}\ f(x),$\vspace*{-0.1cm} \end{center}
stated in Lemmas \ref{lema1} b) and \ref{lema2Hille}, complete the proof of a).

\noindent b) For $x=0$, the proof is similar to the previous one. It suffices to replace $\sqrt{h_n\,}$ by $h_n$, to use the fact that $B(2,n,0) = \frac{1}{\,2\,h_n\,}$ and to apply Lemma \ref{lema2Hille}.\vspace*{0.25cm}
\end{proof}

\begin{remark} \label{Vt,n.Dens} \textup{Analogously, the proof of the condition corresponding to (\ref{(2)}) in Theorem \ref{TeorTLCdensity} relies essentially on the inequalities\vspace*{-0.1cm}
$$ n \,E\!\left(\left|V_{t,n}(x)\right|^{^{2p}}\right)  \leqslant  C \frac{\,h_n^{^{\frac{\,p\,}{2}}}\,}{n^{^{p^{^{}}}}}\, B(2p,n,x)\, E\!\left(f\!\left(G_{(2p,n,x)}\right)\right)\,, \ \ \ x>0\,,$$
                        $$ n \,E\!\left(\left|V_{t,n}(0)\right|^{^{2p}}\right)  \leqslant  C \frac{\,h_n^{p}\,}{n^{^{{p-1}^{^{}}}}}\, B(2p,n,0)\, E\!\left(f\!\left(G_{(2p,n,0)}\right)\right)$$
\noindent and uses the same arguments as before.}

 \end{remark}

\end{document}